\newtheorem{thm}{Theorem}[section]
\newtheorem{cor}[thm]{Corollary}
\newtheorem{lem}[thm]{Lemma}
\newtheorem{rem}[thm]{\bf{Remark}}
\numberwithin{equation}{section}
\begin{document}
\begin{center}
{\bf{Some properties of Generalized Fibonacci difference bounded and $p$-absolutely convergent sequences}}
\\Anupam Das  and Bipan Hazarika$^\ast$


Department of Mathematics, Rajiv Gandhi University, Rono Hills, Doimukh-791 112, Arunachal Pradesh, India\\
Email: anupam.das@rgu.ac.in; bh\_rgu@yahoo.co.in 


\end{center}

\title{}
\author{}
\thanks{{03-01-2016 \\
$^\ast$The corresponding author}}





\begin{abstract} 
The main objective of this paper is to introduced a new sequence space $l_{p}(\hat{F}(r,s)),$ $ 1\leq p \leq \infty$ by using the band matrix $\hat{F}(r,s).$  
 We also establish a few inclusion relations concerning this space and determine its $\alpha-,\beta-,\gamma-$duals. We also characterize some matrix classes on the space $l_{p}(\hat{F}(r,s))$ and examine some geometric properties of this space.
 
\vskip 0.5cm

\textbf{Key Words:} 
Fibonacci numbers; Difference matrix; $\alpha$-,$\beta$-,$\gamma$-duals; Matrix Transformations; 
fixed point property; Banach-Saks type $p.$ 

\vskip 0.5cm

\textbf{AMS Subject classification no:} 11B39; 46A45; 46B45; 46B20. 
\end{abstract}

\maketitle
\pagestyle{myheadings}
\markboth{\rightline {\scriptsize Das and Hazarika}}
         {\leftline{\scriptsize Some properties of  Generalized Fibonacci...}}

\maketitle

\section{ Introduction}
\normalfont

Let $\omega$ be the space of all real-valued sequences. Any vector subspace of $\omega$ is called a $\mathit{sequence\ space}.$ By $l_{\infty},c ,c_{0}$ and $l_{p} \ (1\leq p <  \infty),$ we denote the sets of all bounded, convergent, null sequences and $p-$absolutely convergent series, respectively. Also we use the convensions that $e=(1,1,...)$ and $e^{(n)}$ is the sequence whose only non-zero term is 1 in the $nth$ place for each $n\in\mathbb{N}, $ where $\mathbb{N}=\left\lbrace  0,1,2,...\right\rbrace .$

Let $X$and $Y$ be two sequence spaces and $A=(a_{nk})$ be an infinite matrix of real numbers $a_{nk},$ where $n,k\in\mathbb{N}.$ We write $A=(a_{nk})$ instead of $A=(a_{nk})_{n,k=0}^{\infty}.$ Then we say that $A$ defines a matrix mapping from $X$ into $Y$ and we denote it by writing $A:X\rightarrow Y$ if for every sequence $x=(x_{k})_{k=0}^{\infty}\in X,$ the sequence $Ax=\left\lbrace A_{n}(x) \right\rbrace_{n=0}^{\infty} ,$ the $A$-transform of $x,$ is in $Y,$ where
\begin{equation}
 A_{n}(x)=\sum\limits_{k=0}^{\infty} a_{nk}x_{k}\ \left( n\in \mathbb{N}\right).
 \label{11}
\end{equation}

For simplicity in notation, here and in what follows, the summation without limits runs from $0$ to $\infty.$ Also if $x\in\omega,$ then we write $x=\left(x_{k} \right)_{k=0}^{\infty}.$\par
By $(X,Y),$ we denote the class of all matrices $A$ such that $A:X\rightarrow Y.$ Thus $A\in (X,Y)$ iff the series on the right-hand side of (\ref{11}) converges for each $n\in\mathbb{N} $ and every $x\in X$ and we have $Ax\in Y$ for all  $x\in X.$ 
\par The approach constructing a new sequence space by means of matrix domain has recently employed by several authors.\par
The matrix domain $X_{A}$ of an infinite matrix $A$ in a sequence space $X$ is defined by
\[X_{A}=\left\lbrace x=(x_{k})\in \omega: Ax \in X\right\rbrace.
\]
\par Let $\Delta$ denote the matrix $\Delta=(\Delta_{nk})$ defined by
$$
\Delta_{nk}
= \left\{
        \begin{array}{ll}
           (-1)^{n-k} , 
           & n-1\leq k \leq n \\
            0 ,
           & 0 \leq k < n-1\quad or\quad k>n
        \end{array}
    \right.
$$
The concept of matrix domain we refer to \cite{A,Altay,Aydin,Kiri,Mishra,M.Mursaleen,M,Mursaleen,M1,Michael,Wilansky}.\\
Define the sequence $\left\lbrace f_{n}\right\rbrace_{n=0}^{\infty}$
of Fibonacci numbers given by the linear recurrence relations $ f_{0}=f_{1}=1$ and $ f_{n}=f_{n-1}+f_{n-2}, n\geq 2.$\\
Fibonacci numbers have many interesting properties and applications. For example, the ratio sequences of Fibonacci numbers converges to the golden ratio which is important in sciences and arts. Also some basic properties of Fibonacci numbers are given as follows:
\[\lim_{n\rightarrow \infty}\frac{f_{n+1}}{f_{n}}= \frac{1+\sqrt{5}}{2}=\alpha\quad
(golden\ ratio),
\]
\[\sum_{k=0}^{n}f_{k}=f_{n+2}-1 \quad (n\in \mathbb{N}),
\]
\[\sum_{k}\frac{1}{f_{k}} \mbox{~converges,}
\]
\[f_{n-1}f_{n+1}-f_{n}^{2}=(-1)^{n+1} \quad (n \geq 1)  (Cassini formula)
\]
Substituting for $ f_{n+1} $ in Cassini's formula yields $ f_{n-1}^{2}+f_{n}f_{n-1}-f_{n}^{2}=(-1)^{n+1}.$ For the properties of Fibonnaci numbers and matrix domain related to Fibonnaci numbers we refer to \cite{Kara,T.K}.
\par A sequence space $X$ is called a $FK-$space if it is complete linear metric space with continuous coordinates $ p_{n}:X\rightarrow \mathbb{R} (n\in  \mathbb{N} ),$  where $ \mathbb{R}$ denotes the real field and $ p_{n}(x)=x_{n} $  for all $ x=(x_{k}) \in X $ and every $ n \in \mathbb{N} .$ A $BK-$ space  is a normed $FK-$ space, that is a $BK-$space  is a Banach space with continuous coordinates. The space $ l_{p}(1\leq p < \infty)$ is a BK-space with the norm
 \[ \parallel x \parallel_{p} =\left(\sum\limits_{k=0}^{\infty}\mid x_{k} \mid ^{p}\right)^{1/p}\]
and $ c_{0},c $ and $ l_{\infty} $ are BK-spaces with the norm
 \[\parallel x \parallel_{\infty}
=\sup_{k}| x_{k}|.\]
 The sequence space $ \lambda $ is said to be solid if and only if 
\[\tilde{\lambda}=\left\lbrace \left( u_{k} \right) \in \omega : \exists\left( x_{k} \right) \in \lambda  \mbox{~such~that~} \mid u_{k}\mid \leq \mid x_{k} \mid  \forall k \in \mathbb{N} \right\rbrace \subset \lambda.\]
 A sequence $(b_{n})$ in a normed space $X$ is called a $ Schauder \ basis $ for $X$ if every $x\in  X,$ there is a unique sequence $(\alpha_{n})$
of scalars such that $x=\sum_{n}\alpha_{n}b_{n}, $ i.e., $\lim\limits_{m \rightarrow \infty}\parallel x-\sum\limits_{n=0}^{m}\alpha_{n}b_{n}\parallel =0 .$
\par The  $\alpha-,\beta-,\gamma-$duals of the sequence space $X$ are respectively defined by \\
$X^{\alpha}=\left\lbrace a=(a_{k})\in \omega : ax=(a_{k}x_{k})\in l_{1}\:\forall\: x=(x_{k})\in X  \right\rbrace, $\\
$X^{\beta}=\left\lbrace a=(a_{k})\in \omega : ax=(a_{k}x_{k})\in cs \:\forall\: x=(x_{k})\in X  \right\rbrace ,$\\
and \\
$X^{\gamma}=\left\lbrace a=(a_{k})\in \omega : ax=(a_{k}x_{k})\in bs \:\forall\: x=(x_{k})\in X  \right\rbrace ,$ \\
where $cs$ and $bs$ are the sequence spaces of all convergent and bounded series, respectively (see for instance \cite{A,M,PK}).\\ 
 Now let $A=(a_{nk})$ be an infinite matrix and consider  the following conditions:
\begin{equation}
\sup_{n} \sum\limits_{k}\left| a_{nk}\right|^{q}  < \infty, q=\frac{p}{p-1}
\label{17}
\end{equation} 
\begin{equation}
\lim_{n}a_{nk}\: \mbox{~exists~}\forall \:k
\label{18}
\end{equation}
\begin{equation}
\sup_{K \in \mathcal{F}} \sum\limits_{k}\left| \sum\limits_{n \in K} a_{nk}\right|^{q}  < \infty, q=\frac{p}{p-1}
\label{19}
\end{equation}
\begin{equation}
\lim\limits_{n}\sum\limits_{k}\left| a_{nk}\right|=
\sum\limits_{k}\left|\lim\limits_{n} a_{nk}\right|
\label{20}
\end{equation}
Now we may give the following lemma due to Stieglitz and Tietz \cite{Michael} on the characterization of the matrix transformations between some sequence spaces.
\begin{lem}\label{lem11}
The following statements hold:
\begin{enumerate}
\item[(a)] $A =(a_{nk}) \in (l_{p},c)$ iff (\ref{17}),(\ref{18}) holds, $1< p < \infty .$
\item[(b)] $A =(a_{nk}) \in (l_{p},l_{1})$ iff (\ref{19}) holds, $1 < p < \infty.$
\item[(c)] $A =(a_{nk}) \in (l_{\infty},c)$ iff (\ref{18}),(\ref{20}) holds.
\item[(d)] $A =(a_{nk}) \in (l_{p},l_{\infty})$ iff
(\ref{17}) holds, $1 < p < \infty.$
\end{enumerate}
\end{lem}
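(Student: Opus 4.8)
The plan is to treat the four parts through the duality $(l_{p})^{*}=l_{q}$ (for $1<p<\infty$, $1/p+1/q=1$) together with the standard fact that a matrix map between $BK$-spaces is automatically continuous, so that H\"older's inequality and the uniform boundedness principle are available throughout; parts (a), (b), (d) then reduce to estimates on finitely supported sequences, whereas (c) needs different tools since $l_{\infty}^{*}$ is not a sequence space. I would start with (d) as the backbone. If $\sup_{n}\sum_{k}|a_{nk}|^{q}=:M^{q}<\infty$, then each row $a_{n}=(a_{nk})_{k}$ lies in $l_{q}$ and H\"older gives $|A_{n}(x)|\le\|a_{n}\|_{q}\|x\|_{p}\le M\|x\|_{p}$, so $\sup_{n}|A_{n}(x)|<\infty$ and $Ax\in l_{\infty}$. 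Conversely, if $A\in(l_{p},l_{\infty})$ then $A$ is continuous, each $A_{n}$ is a bounded functional on $l_{p}$ with $\|A_{n}\|=\|a_{n}\|_{q}$, and $\sup_{n}\|A_{n}\|\le\|A\|<\infty$, which is exactly (\ref{17}).

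For (a), necessity of (\ref{17}) follows from $c\subset l_{\infty}$ and part (d), while necessity of (\ref{18}) follows by feeding the basis vectors $e^{(k)}\in l_{p}$ into $A$, since then $(a_{nk})_{n}=Ae^{(k)}\in c$. For sufficiency I would set $\alpha_{k}=\lim_{n}a_{nk}$; a Fatou estimate against (\ref{17}) shows $(\alpha_{k})\in l_{q}$, and then an $\varepsilon/3$ argument — exact on finitely supported $x$ by (\ref{18}) and extended to all of $l_{p}$ using the uniform bound (\ref{17}) and the density of finitely supported sequences — gives $A_{n}(x)\to\sum_{k}\alpha_{k}x_{k}$, so $Ax\in c$.

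For (b) the key identity is that for a finite $K\subset\mathbb{N}$ and signs $\theta_{n}=\pm1$ one has $\sum_{n\in K}\theta_{n}A_{n}(x)=\sum_{k}\big(\sum_{n\in K}\theta_{n}a_{nk}\big)x_{k}$. Taking $\theta_{n}=\operatorname{sgn}A_{n}(x)$ and applying H\"older reduces $\|Ax\|_{1}=\sum_{n}|A_{n}(x)|$ to controlling $\sum_{k}|\sum_{n\in K}\theta_{n}a_{nk}|^{q}$; splitting $K$ according to sign and using convexity bounds this by a constant multiple of (\ref{19}), which yields sufficiency. Necessity runs the identity backwards: continuity of $A\colon l_{p}\to l_{1}$ bounds the functionals $x\mapsto\sum_{n\in K}A_{n}(x)$ uniformly in $(l_{p})^{*}=l_{q}$, and their $l_{q}$-norms are precisely the quantities in (\ref{19}).

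Part (c) carries the real work. Necessity of (\ref{18}) is again immediate from the basis vectors, and $A\in(l_{\infty},c)\subset(l_{\infty},l_{\infty})$ forces $\sup_{n}\sum_{k}|a_{nk}|<\infty$, so Fatou gives $(\alpha_{k})=(\lim_{n}a_{nk})\in l_{1}$. The crux is that, given (\ref{18}) and this uniform row bound, condition (\ref{20}) is equivalent to $\sum_{k}|a_{nk}-\alpha_{k}|\to0$, i.e. to $l_{1}$-convergence of the rows: the sufficiency half of this equivalence is Scheff\'e's lemma (coordinatewise convergence plus convergence of the $l_{1}$-norms forces norm convergence), after which $|\sum_{k}(a_{nk}-\alpha_{k})x_{k}|\le\|x\|_{\infty}\sum_{k}|a_{nk}-\alpha_{k}|\to0$ shows $Ax\to\sum_{k}\alpha_{k}x_{k}\in c$. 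The hard part will be the necessity of (\ref{20}): assuming $\sum_{k}|a_{nk}-\alpha_{k}|\not\to0$, I would run a gliding-hump construction, choosing disjoint coordinate blocks and a bounded sign sequence $x\in l_{\infty}$ on which $A_{n}(x)$ oscillates by a fixed amount along a subsequence, contradicting $Ax\in c$; controlling the tails of the $l_{1}$-summable family against these bumps so that the oscillation genuinely survives is the most delicate estimate in the lemma.
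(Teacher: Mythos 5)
Your proposal is mathematically sound, but be aware that the paper does not prove Lemma \ref{lem11} at all: it is quoted, with a citation, from the survey of Stieglitz and Tietz \cite{Michael}, where these characterizations are tabulated as classical results, so any proof you give is necessarily a different route from the paper's. What you supply is the standard functional-analytic derivation: H\"older plus automatic continuity of matrix maps between $BK$-spaces (equivalently, uniform boundedness) for (d); necessity via the unit vectors $e^{(k)}$, and sufficiency by a Fatou estimate and an $\varepsilon/3$ density argument, for (a); the sign-vector and finite-subset duality trick with a Minkowski or convexity split of $K$ into $K^{+}$ and $K^{-}$ for (b); and for (c) --- which is essentially Schur's theorem --- the reduction of (\ref{18}) and (\ref{20}) to $l_{1}$-convergence of the rows, via Scheff\'e's lemma in one direction and a gliding hump in the other. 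All of these steps are correct, and your insistence that the hump must produce genuine \emph{oscillation} (not merely a nonzero lower bound, since the limit of $A_{n}(x)$ need not be zero) is exactly the delicate point. The one soft spot is that this gliding-hump half of (c) is sketched rather than executed: to close it you would fix $\varepsilon>0$, pass to a subsequence $(n_{j})$ with $\sum_{k}|a_{n_{j}k}-\alpha_{k}|\geq\varepsilon$, use the column limits (\ref{18}) together with the $l_{1}$-tails of each row to extract disjoint coordinate blocks $K_{j}$ carrying all but a small fraction of the mass of row $n_{j}$, and define $x\in l_{\infty}$ blockwise by $x_{k}=(-1)^{j}\operatorname{sgn}(a_{n_{j}k}-\alpha_{k})$ for $k\in K_{j}$, so that $\sum_{k}(a_{n_{j}k}-\alpha_{k})x_{k}$ alternates in sign with absolute value at least $\varepsilon/2$, contradicting convergence of $(A_{n}(x))_{n}$. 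Carrying this out would make your argument complete and self-contained --- something the paper's bare citation does not provide --- at the cost of roughly a page of work for a result the literature already records.
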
 
\section{Fibonacci difference sequence space $l_{p}(\hat{F}(r,s))$ }

In this section, we have used the Fibonacci band matrix  $ \hat{F}(r,s)=\left( f_{nk}(r,s) \right) $ and introduce the sequence space $l_{p}(\hat{F}(r,s)).$ Also we present some inclusion theorems and construct the Schauder basis of the space $l_{p}(\hat{F}(r,s)).$\par
Let $f_{n}$ be the $nth$ Fibonacci number for every $n \in \mathbb{N}.$ Then we define the infinite matrix  $\hat{F}(r,s)=\left( f_{nk}(r,s) \right) $ by
\begin{equation}
f_{nk}(r,s)
= \left\{
        \begin{array}{ll}
           s\frac{f_{n+1}}{f_{n}} , 
           & k=n-1 \\
            r\frac{f_{n}}{f_{n+1}} ,
           & k=n \\
           0,
           & 0\leq k < n-1\quad or\quad k>n
        \end{array}
    \right.
\end{equation}
where $n,k \in \mathbb{N}$ and $r,s \in \mathbb{R}- \left\lbrace 0\right\rbrace.$

\par Define the sequence $y=(y_{n}),$ which will be frequently used, by the $\hat{F}(r,s)$ - transform of a sequence $x=(x_{n}),$ i.e., $y_{n}=\hat{F}(r,s)_{n}(x),$ where
\begin{equation}
y_{n}
= \left\{
        \begin{array}{ll}
           r\frac{f_{0}}{f_{1}}x_{0}=rx_{0} , 
           &  n=0 \\
            r\dfrac{_{f_{n}}}{f_{n+1}}x_{n}+s\dfrac{_{f_{n+1}}}{f_{n}}x_{n-1} ,
           &  n \geq 1
        \end{array}
    \right.
  \label{22}  
\end{equation}
where $ n \in \mathbb{N} $ .
\par Moreover it is obvious that $\hat{F}(r,s)$ is a triangle. Thus it has a unique inverse $\hat{F}(r,s)^{-1}=(\hat{f}_{nk}(r,s)^{-1})$ and it is given by
\begin{equation}
\hat{f}_{nk}(r,s)^{-1}
= \left\{
        \begin{array}{ll}
          \frac{1}{r}\left( -\frac{s}{r}\right)^{n-k}  \frac{f_{n+1}^{2}}{f_{k}f_{k+1}} , 
           &  0 \leq k \leq n \\
           0 ,
           &  k>n 
        \end{array}
    \right.
   \label{23} 
\end{equation}
for all $ n,k \in \mathbb{N}.$ There we have by (\ref{23}) that 
\begin{equation}
x_{k}=\sum\limits_{j=0}^{k}   \frac{1}{r}\left( -\frac{s}{r}\right)^{k-j}  \frac{f_{k+1}^{2}}{f_{j}f_{j+1}}y_{j}; (k \in \mathbb{N}).
\label{24}
\end{equation}
\par Now we introduce new Fibonacci sequence spaces as follows 
\[l_{p}(\hat{F}(r,s))=\left\lbrace x=(x_{n})\in\omega:\sum\limits_{n=1}^{\infty}\left| r\dfrac{_{f_{n}}}{f_{n+1}}x_{n}+s\dfrac{_{f_{n+1}}}{f_{n}}x_{n-1}\right|^{p}<\infty \right\rbrace, 1\leq p< \infty \] and
\[l_{\infty}(\hat{F}(r,s))=\left\lbrace x=(x_{n})\in\omega:\sup_{n}\left| r\dfrac{_{f_{n}}}{f_{n+1}}x_{n}+s\dfrac{_{f_{n+1}}}{f_{n}}x_{n-1}\right|<\infty \right\rbrace .\]
\par The sequence spaces $l_{p}(\hat{F}(r,s))$ and $l_{\infty}(\hat{F}(r,s))$ may be redefined as 
\begin{equation}
l_{p}(\hat{F}(r,s))=\left( l_{p}\right)_{\hat{F}(r,s)} ,
l_{\infty}(\hat{F}(r,s))=\left( l_{\infty}\right)_{\hat{F}(r,s)}.
\label{25}
\end{equation}
\par In this section, we give some results related to the space  $l_{p}(\hat{F}(r,s)), 1\leq p \leq \infty.$
\begin{thm}
Let $1 \leq p< \infty.$ Then $l_{p}(\hat{F}(r,s))$ is a BK-space with norm 
  \[ \parallel x\parallel_{l_{p}(\hat{F}(r,s))}= \left( \sum\limits_{k}\left| \hat{F}(r,s)_{k}(x)\right|^{p} \right)^{1/p}  \]
  and $l_{\infty}(\hat{F}(r,s))$ is a BK-space with norm 
  \[ \parallel x\parallel_{l_{\infty}(\hat{F}(r,s))}= \sup_{k}| \hat{F}(r,s)_{k}(x)|.\]
\end{thm}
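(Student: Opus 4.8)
The plan is to exploit the general principle that the matrix domain $X_A$ of a triangle matrix $A$ in a $BK$-space $X$ is itself a $BK$-space, equipped with the norm $\|x\|_{X_A}=\|Ax\|_X$. Since the excerpt establishes in \eqref{25} that $l_p(\hat F(r,s))=(l_p)_{\hat F(r,s)}$ and $l_\infty(\hat F(r,s))=(l_\infty)_{\hat F(r,s)}$, and since $\hat F(r,s)$ is explicitly a triangle (its diagonal entries $r f_n/f_{n+1}$ are nonzero because $r\neq 0$), the result will follow once I verify the hypotheses of this standard theorem. Concretely, $l_p$ for $1\le p<\infty$ is a $BK$-space under $\|\cdot\|_p$ and $l_\infty$ is a $BK$-space under $\|\cdot\|_\infty$, as recalled in the introduction, so both target spaces qualify.

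First I would record that $\hat F(r,s)$ is a triangle, hence a one-to-one linear map on $\omega$, and that the proposed functional $\|x\|_{l_p(\hat F(r,s))}=\bigl(\sum_k|\hat F(r,s)_k(x)|^p\bigr)^{1/p}=\|\hat F(r,s)x\|_p$ is genuinely a norm: the triangle inequality and homogeneity are inherited directly from $\|\cdot\|_p$ via linearity of the transform, and positive-definiteness uses injectivity of the triangle (if $\|x\|_{l_p(\hat F(r,s))}=0$ then $\hat F(r,s)x=0$, so $x=0$). The analogous statement with $\sup_k$ handles the $l_\infty$ case. So $l_p(\hat F(r,s))$ is a normed space that is linearly isometric to a subspace of $l_p$ via $x\mapsto \hat F(r,s)x$.

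Next I would prove completeness. Let $(x^{(m)})$ be a Cauchy sequence in $l_p(\hat F(r,s))$. Then $(\hat F(r,s)x^{(m)})$ is Cauchy in the Banach space $l_p$, hence converges to some $y\in l_p$. Because $\hat F(r,s)$ is a triangle, I can apply its explicit inverse from \eqref{23}--\eqref{24} to define $x=\hat F(r,s)^{-1}y\in\omega$; then $\hat F(r,s)x=y\in l_p$, so $x\in l_p(\hat F(r,s))$, and $\|x^{(m)}-x\|_{l_p(\hat F(r,s))}=\|\hat F(r,s)x^{(m)}-y\|_p\to 0$. Thus the space is complete, and the same argument with $l_\infty$ in place of $l_p$ gives completeness there.

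Finally I would establish continuity of the coordinate functionals $p_n(x)=x_n$. For each fixed $n$, the inversion formula \eqref{24} expresses $x_n$ as a finite linear combination $\sum_{j=0}^n c_{nj}\,(\hat F(r,s)x)_j$ of the first $n+1$ coordinates of $\hat F(r,s)x$, with coefficients $c_{nj}=\tfrac1r(-s/r)^{n-j}f_{n+1}^2/(f_jf_{j+1})$. Since each coordinate functional on $l_p$ (respectively $l_\infty$) is bounded by the norm, a finite linear combination of them composed with the bounded operator $x\mapsto \hat F(r,s)x$ is a bounded functional on $l_p(\hat F(r,s))$; explicitly $|x_n|\le\bigl(\sum_{j=0}^n|c_{nj}|\bigr)\|x\|_{l_p(\hat F(r,s))}$ in the $l_\infty$ coordinate bound, and analogously in the $l_p$ case. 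Hence the coordinates are continuous and both spaces are $BK$-spaces. I expect the main (though still routine) obstacle to be the completeness step, where one must carefully invoke the triangularity of $\hat F(r,s)$ to pull the $l_p$-limit $y$ back to a genuine sequence $x$ in $\omega$ and confirm $\hat F(r,s)x=y$; everything else is a transfer of the $BK$ structure of $l_p$ and $l_\infty$ through the isometry.
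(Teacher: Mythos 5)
Your proposal is correct and follows essentially the same route as the paper: the paper's entire proof consists of noting that $l_{p}$ and $l_{\infty}$ are BK-spaces, that $\hat{F}(r,s)$ is a triangle, and then citing Theorem 4.3.3 of Wilansky, which is precisely the general principle you state at the outset (the matrix domain of a triangle in a BK-space is a BK-space under $\|x\|_{X_{A}}=\|Ax\|_{X}$). The only difference is that you go on to prove that principle from scratch --- norm axioms via injectivity of the triangle, completeness by pulling the $l_{p}$-limit back through the explicit inverse \eqref{23}--\eqref{24}, and coordinate continuity via the inversion formula --- which makes your argument self-contained where the paper rests entirely on the citation.
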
 

\begin{proof}
Since (\ref{25}) holds, $l_{p}$ and $l_{\infty}$ are BK-spaces with respect to their natural norm and the matrix $\hat{F}(r,s)$ is triangular matrix. By Theorem 4.3.3 of Wilansky \cite{Wilansky} gives the fact that the spaces 
$l_{p}(\hat{F}(r,s)),1 \leq p< \infty $ and   $l_{\infty}(\hat{F}(r,s))$ 
are BK space with the given norms. 
\end{proof}    
\begin{rem}
The spaces $l_{p}(\hat{F}(r,s))$ for $ 1\leq p < \infty$ and $l_{\infty}(\hat{F}(r,s))$ are non-absolute type because $ \parallel x \parallel_{l_{p}(\hat{F}(r,s))} \neq \parallel \mid x \mid \parallel_{l_{p}(\hat{F})(r,s)} $ and
 $ \parallel x \parallel_{l_{\infty}(\hat{F}(r,s))} \neq \parallel \mid x \mid \parallel_{l_{\infty}(\hat{F})(r,s)},$ where $ \mid x \mid =(\mid x_{k} \mid).$ 
\end{rem} 
\begin{thm}\label{thm23}
The sequence spaces  $l_{p}(\hat{F}(r,s)), 1 \leq p < \infty$ and $l_{\infty}(\hat{F}(r,s))$ of non-absolute type are linearly isomorphic to the spaes $l_{p}$ and $l_{\infty},$ respectively, i.e. $l_{p}(\hat{F}(r,s)) \cong l_{p}$ and $l_{\infty}(\hat{F}(r,s)) \cong l_{\infty}.$
\end{thm}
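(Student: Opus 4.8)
The plan is to construct an explicit linear bijection between $l_p(\hat{F}(r,s))$ and $l_p$ (the argument for $l_\infty$ being identical) and verify that it preserves the norm, so that it is in fact a linear isometry, which in particular establishes the desired isomorphism. The natural candidate for this map is the transform $T:l_p(\hat{F}(r,s))\to l_p$ defined by $T(x)=\hat{F}(r,s)(x)=y$, where $y=(y_n)$ is given componentwise by the formula (\ref{22}). The linearity of $T$ is immediate from the linearity of the matrix transformation, so the work lies in showing $T$ is well-defined, injective, and surjective with the correct image.

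First I would check that $T$ is well-defined and injective. Well-definedness is essentially built into the definition of the space: by the very definition of $l_p(\hat{F}(r,s))$ as $(l_p)_{\hat{F}(r,s)}$, a sequence $x$ lies in $l_p(\hat{F}(r,s))$ exactly when $\hat{F}(r,s)(x)\in l_p$, so $T(x)=y\in l_p$ holds automatically. Injectivity follows because $\hat{F}(r,s)$ is a triangle (lower triangular with nonzero diagonal entries $r f_n/f_{n+1}$), hence $T(x)=0$ forces $x=0$; alternatively, injectivity is witnessed directly by the inversion formula (\ref{24}), which recovers $x$ uniquely from $y$.

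The heart of the argument is surjectivity, and I expect this to be the main obstacle, since one must exhibit a genuine preimage in $\omega$ and confirm it lands in the domain space. Given an arbitrary $y=(y_k)\in l_p$, I would define $x=(x_k)$ by the explicit inverse formula (\ref{24}), namely
\[
x_k=\sum_{j=0}^{k}\frac{1}{r}\left(-\frac{s}{r}\right)^{k-j}\frac{f_{k+1}^{2}}{f_{j}f_{j+1}}\,y_{j}\qquad(k\in\mathbb{N}).
\]
Each $x_k$ is a finite sum, so $x\in\omega$ is well-defined. The key point is then to substitute this $x$ back into the defining expression $r\frac{f_n}{f_{n+1}}x_n+s\frac{f_{n+1}}{f_n}x_{n-1}$ and verify, using the structure of $\hat{F}(r,s)^{-1}$ as the genuine two-sided inverse of $\hat{F}(r,s)$, that the telescoping of the $\left(-s/r\right)^{k-j}$ weights collapses the expression back to $y_n$. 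This confirms $\hat{F}(r,s)(x)=y$, whence $x\in l_p(\hat{F}(r,s))$ and $T(x)=y$, giving surjectivity.

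Finally, norm preservation is immediate once $T$ is established as a bijection: by the definition of the norm on $l_p(\hat{F}(r,s))$ from the previous theorem,
\[
\|x\|_{l_p(\hat{F}(r,s))}=\left(\sum_{k}\left|\hat{F}(r,s)_k(x)\right|^{p}\right)^{1/p}=\left(\sum_{k}|y_k|^{p}\right)^{1/p}=\|y\|_{p}=\|T(x)\|_{p},
\]
so $T$ is norm-preserving and therefore a linear isometry, which yields $l_p(\hat{F}(r,s))\cong l_p$. The same map and computation, with the supremum norm replacing the $\ell^p$ sum, handles the case $l_\infty(\hat{F}(r,s))\cong l_\infty$. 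The one calculation requiring care is the verification of the inverse identity in the surjectivity step, since it rests on the correctness of the closed form (\ref{23}) for $\hat{F}(r,s)^{-1}$.
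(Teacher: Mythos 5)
Your proposal is correct and takes essentially the same route as the paper's own proof: the same map $Tx=\hat{F}(r,s)(x)$, injectivity from the triangle structure, and surjectivity plus norm preservation obtained by defining the preimage of $y\in l_p$ through the inverse formula (\ref{24}) and checking that $\hat{F}(r,s)(x)=y$. The only organizational difference is that the paper folds this verification into a single norm computation, whereas you separate the surjectivity check from the isometry statement.
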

\begin{proof}
To prove this, we have to show that there exists a linear bijective mapping between  $l_{p}(\hat{F}(r,s))$ and $l_{p}$ for $1 \leq p \leq \infty.$\\
 Let us consider a mapping $T$ defined from  $l_{p}(\hat{F}(r,s))$ to $l_{p}$ by $Tx=\hat{F}(r,s)(x)=y \in l_{p}$ for every $x \in l_{p}(\hat{F}(r,s)),$ where $x=(x_{k})$ and $y=(y_{k}).$ 
\par It is obvious that $T$ is linear. Further, it is trivial that $x=0$ whenever $Tx=0.$ Hence $T$ is injective.
\par Let $y=(y_{k}) \in l_{p},$ $ 1 \leq p \leq \infty$ and define the sequence $x=(x_{k})$ by 
\[x_{k}=\sum\limits_{j=0}^{k}  \frac{1}{r}\left(- \frac{s}{r}\right)^{k-j} \frac{f_{k+1}^{2}}{f_{j}f_{j+1}}y_{j} \mbox{~for ~all~} k \in \mathbb{N}.\]
Then, in the cases $1 \leq p < \infty$ and $p=\infty$ we get\\
$\parallel  x \parallel_{l_{p}(\hat{F}(r,s))}
 = \left( \sum\limits_{k}\left| r\frac{f_{k}}{f_{k+1}}x_{k}+s\frac{f_{k+1}}{f_{k}}x_{k-1}\right|^{p}\right) ^{1/p}\\
=\left( \sum\limits_{k}\left| r\frac{f_{k}}{f_{k+1}}\sum\limits_{j=1}^{k}\frac{1}{r}\left(-\frac{s}{r} \right)^{k-j}\frac{f_{k+1}^{2}}{f_{j}f_{j+1}}y_{j} +s\frac{f_{k+1}}{f_{k}}\sum\limits_{j=1}^{k-1}\frac{1}{r}\left(-\frac{s}{r} \right)^{k-j-1}\frac{f_{k+1}^{2}}{f_{j}f_{j+1}}y_{j}\right|^{p}\right) ^{1/p}
\\=\left(\sum\limits_{k}\mid y_{k}\mid ^{p}\right)^{1/p} 
\\=\parallel  y \parallel_{l_{p}}
< \infty.$\\
Similarly we can show that
$\parallel  x \parallel_{l_{\infty}(\hat{F}(r,s))}=\parallel y \parallel_{\infty}.$\\
Thus we have $x \in l_{p}(\hat{F}(r,s))$ for $ 1 \leq p \leq \infty.$ Hence $T$ is surjective and norm preserving. Consequently $T$ is a linear bijection which proves that the spaces $l_{p}(\hat{F})(r,s)$ and $l_{p}$  are linearly isomorphic for $ 1 \leq p \leq \infty.$
\end{proof}
\begin{thm}
 $l_{p} \subset l_{p}(\hat{F}(r,s))$ holds for $ 1\leq p \leq \infty$ and for finite $r,s$ such that $\left| -\frac{s}{r} \right| \geq 1,$
  $\mid r \mid \leq 1$ and $\mid s \mid \leq 1/2.$
\end{thm}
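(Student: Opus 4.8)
The plan is to show the inclusion $l_p \subset l_p(\hat{F}(r,s))$ by taking an arbitrary $x = (x_k) \in l_p$ and verifying that its $\hat{F}(r,s)$-transform $y = \hat{F}(r,s)(x)$ again lies in $l_p$, since by definition $x \in l_p(\hat{F}(r,s))$ precisely when $y \in l_p$. From the definition \eqref{22} of the transform, $y_0 = r x_0$ and for $n \geq 1$ we have $y_n = r\frac{f_n}{f_{n+1}} x_n + s\frac{f_{n+1}}{f_n} x_{n-1}$. The strategy is to bound $\|y\|_{l_p}$ in terms of $\|x\|_{l_p}$ using the triangle inequality (Minkowski's inequality in the case $1 \le p < \infty$, and the analogous supremum estimate for $p=\infty$), splitting $y$ into the ``diagonal'' part coming from the $r\frac{f_n}{f_{n+1}} x_n$ terms and the ``subdiagonal'' part coming from the $s\frac{f_{n+1}}{f_n} x_{n-1}$ terms.

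The key observation making the inclusion work is that the Fibonacci ratios are bounded by controllable constants. First I would record that $\frac{f_n}{f_{n+1}} \le 1$ for every $n$, so under the hypothesis $|r| \le 1$ the diagonal coefficients satisfy $\left| r\frac{f_n}{f_{n+1}} \right| \le 1$; this gives $\bigl\| (r\frac{f_n}{f_{n+1}} x_n)_n \bigr\|_{l_p} \le \|x\|_{l_p}$. For the subdiagonal part, the relevant quantity is $\frac{f_{n+1}}{f_n}$, which is \emph{not} bounded by $1$ but does converge to the golden ratio $\alpha = \frac{1+\sqrt{5}}{2}$; since the sequence $\frac{f_{n+1}}{f_n}$ is bounded (its terms lie between $1$ and $2$), we have $\left| s\frac{f_{n+1}}{f_n} \right| \le 2|s| \le 1$ under the hypothesis $|s| \le 1/2$, so that $\bigl\| (s\frac{f_{n+1}}{f_n} x_{n-1})_n \bigr\|_{l_p} \le \|x\|_{l_p}$ as well. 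Combining these two bounds via Minkowski yields $\|y\|_{l_p} \le 2\|x\|_{l_p} < \infty$, establishing $x \in l_p(\hat{F}(r,s))$ and hence the desired inclusion.

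I expect the main subtlety to lie in correctly justifying the uniform bound $\frac{f_{n+1}}{f_n} \le 2$ on the subdiagonal ratios; this follows from the recurrence $f_{n+1} = f_n + f_{n-1} \le 2 f_n$ together with $\frac{f_1}{f_0} = 1$, so that the constant $1/2$ in the hypothesis $|s| \le 1/2$ is exactly what is needed to absorb this worst-case ratio. The remaining hypothesis $\left| -\frac{s}{r} \right| \ge 1$ plays no role in this direction of the argument (it is the condition guaranteeing convergence of the inverse expansion \eqref{24}, used to recover $x$ from $y$); I would either note that it is superfluous for the inclusion itself or simply carry it along without invoking it. The $p = \infty$ case is handled identically, replacing the $l_p$-norm estimates by the corresponding supremum estimates $\sup_n |a_n x_n| \le (\sup_n |a_n|)\,\|x\|_\infty$, and requires no separate argument beyond this substitution.
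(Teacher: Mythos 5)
Your treatment of the inclusion itself follows the paper's approach exactly: the same bounds $\frac{f_n}{f_{n+1}} \le 1$ and $\frac{f_{n+1}}{f_n} \le 2$ on the Fibonacci ratios, combined with $|r| \le 1$, $|s| \le 1/2$, give $\parallel \hat{F}(r,s)(x) \parallel_{l_p} \le (|r|+2|s|)\parallel x \parallel_{l_p} \le 2 \parallel x\parallel_{l_p}$ for $1 \le p \le \infty$. (If anything, your explicit appeal to Minkowski's inequality is cleaner than the paper's displayed term-by-term estimate, which for $p>1$ silently omits the convexity constant.)

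However, there is a genuine gap. In this paper $\subset$ denotes \emph{proper} inclusion: the paper's proof first establishes $l_p \subseteq l_p(\hat{F}(r,s))$ and then, as a second step, exhibits the sequence $x=(x_k)$ with $x_k = \frac{1}{r}\left(-\frac{s}{r}\right)^{k} f_{k+1}^{2}$, which lies in $l_p(\hat{F}(r,s)) \setminus l_p$: its transform is $\hat{F}(r,s)(x) = e^{(0)} = (1,0,0,\ldots) \in l_p$, while $|x_k| \ge \frac{1}{|r|} f_{k+1}^{2} \rightarrow \infty$ precisely because $\left| -\frac{s}{r}\right| \ge 1$, so $x \notin l_p$ for any $1 \le p \le \infty$. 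This is where the hypothesis $\left| -\frac{s}{r}\right| \ge 1$ is actually used, and your reading of it is incorrect: it is not needed for ``convergence of the inverse expansion'' in (\ref{24}) (the inverse matrix is a triangle, so every sum there is finite and converges unconditionally for any nonzero $r,s$); it is the condition that makes the inclusion strict. As written, your proposal proves only $l_p \subseteq l_p(\hat{F}(r,s))$, which is strictly weaker than the theorem's assertion; to complete the proof you must produce a witness in $l_p(\hat{F}(r,s)) \setminus l_p$, for instance the sequence above.
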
 
\begin{proof}
Let $ x=(x_{k}) \in l_{p} $ and $1\leq p \leq \infty .$ 
Since the inequalities  $ \frac{f_{k}}{f_{k+1}} \leq 1 $ and $ \frac{f_{k+1}}{f_{k}} \leq 2 $ for every $ k \in \mathbb{N} $ therefore we have \\
$ \sum\limits_{k}\mid \hat{F}(r,s)_{k}(x) \mid^{p} \\
=\sum\limits_{k}\left| r\frac{f_{k}}{f_{k+1}}x_{k}+s\frac{f_{k+1}}{f_{k}}x_{k-1}\right|^{p}
\\
\leq \mid r\mid^{p}\sum\limits_{k}\mid x_{k}\mid ^{p}+ \mid 2s\mid^{p}\sum\limits_{k}\mid x_{k-1}\mid ^{p} $\\
and
\\
$\sup_{k \in \mathbb{N}}\mid \hat{F}(r,s)_{k}(x)\mid
\leq (\mid r \mid + \mid 2s\mid)\sup_{k \in \mathbb{N}}\mid x_{k}\mid$\\
which together gives\\
$\parallel x\parallel_{l_{p}(\hat{F}(r,s))}\leq (\mid r \mid + \mid 2s\mid)\parallel x\parallel_{l_{p}}$ for $1\leq p \leq \infty ,$ where $r,s$ are finite.\\
Therefore $ \parallel x \parallel_{l_{p}(\hat{F})(r,s)} < \infty,$ since
$  x \in l_{p}. $ \\
Hence $ l_{p} \subseteq l_{p}(\hat{F}(r,s)). $ 
Further since $x=(x_{k})=\left(\frac{1}{r}\left( -\frac{s}{r}\right)^{k}  f_{k+1}^{2}\right) $ is in $l_{p}(\hat{F}(r,s))-l_{p}$ for 
$\left| -\frac{s}{r} \right| \geq 1.$
Therefore $l_{p} \subset l_{p}(\hat{F}(r,s))$ for $1 \leq p \leq \infty.$
\end{proof}
\begin{thm}
For $1 \leq p < q,$
 $ l_{p}(\hat{F}(r,s)) \subset l_{q}(\hat{F}(r,s))$ holds.
\end{thm}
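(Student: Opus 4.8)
The plan is to reduce the claim to the classical strict inclusion $l_{p} \subset l_{q}$ for $1 \leq p < q$ by transporting it through the isometric isomorphism $T = \hat{F}(r,s)$ established in Theorem \ref{thm23}. Since $x \in l_{p}(\hat{F}(r,s))$ holds precisely when its $\hat{F}(r,s)$-transform $y = \hat{F}(r,s)(x)$ lies in $l_{p}$, and likewise $x \in l_{q}(\hat{F}(r,s))$ precisely when $y \in l_{q}$, the inclusion between the two Fibonacci spaces is equivalent to the inclusion between $l_{p}$ and $l_{q}$.

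First I would take an arbitrary $x \in l_{p}(\hat{F}(r,s))$ and write $y = \hat{F}(r,s)(x)$, so that by definition $\sum_{k} |y_{k}|^{p} < \infty$, i.e. $y \in l_{p}$. Invoking the standard fact that $l_{p} \subseteq l_{q}$ whenever $p \leq q$ (which holds because $\sum_{k}|y_{k}|^{p} < \infty$ forces $|y_{k}| \to 0$, so that $|y_{k}| \leq 1$ for large $k$ and hence $|y_{k}|^{q} \leq |y_{k}|^{p}$ there), I conclude $y \in l_{q}$, and therefore $x \in l_{q}(\hat{F}(r,s))$. This yields the inclusion $l_{p}(\hat{F}(r,s)) \subseteq l_{q}(\hat{F}(r,s))$.

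To upgrade this to a strict inclusion, I would exhibit an element of $l_{q}(\hat{F}(r,s))$ that is not in $l_{p}(\hat{F}(r,s))$. Since $T$ is a bijection, it suffices to choose a sequence $y \in l_{q} \setminus l_{p}$ and pull it back by the inverse relation (\ref{24}), i.e. set $x_{k} = \sum_{j=0}^{k} \frac{1}{r}\left(-\frac{s}{r}\right)^{k-j}\frac{f_{k+1}^{2}}{f_{j}f_{j+1}} y_{j}$. A convenient choice is $y_{n} = (n+1)^{-1/p}$: then $\sum_{n} |y_{n}|^{q} = \sum_{n} (n+1)^{-q/p}$ converges because $q/p > 1$, so $y \in l_{q}$, while $\sum_{n} |y_{n}|^{p} = \sum_{n} (n+1)^{-1}$ diverges, so $y \notin l_{p}$. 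The resulting $x$ then lies in $l_{q}(\hat{F}(r,s))$ but not in $l_{p}(\hat{F}(r,s))$.

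The argument is essentially routine once Theorem \ref{thm23} is available; the only point requiring a little care is the verification of the strict part, namely confirming that the pulled-back sequence $x$ genuinely witnesses the separation. This is immediate from the norm-preserving character of $T$, since $\parallel x \parallel_{l_{q}(\hat{F}(r,s))} = \parallel y \parallel_{l_{q}} < \infty$ while $\parallel x \parallel_{l_{p}(\hat{F}(r,s))} = \parallel y \parallel_{l_{p}} = \infty$, so no direct estimation of the coordinates $x_{k}$ themselves is needed.
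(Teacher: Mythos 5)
Your proposal is correct, and its first half is exactly the paper's argument: transport the question through the isomorphism of Theorem \ref{thm23}, so that $x \in l_{p}(\hat{F}(r,s))$ iff $y=\hat{F}(r,s)(x)\in l_{p}$, and then invoke the classical inclusion $l_{p}\subset l_{q}$. Where you go beyond the paper is the strictness: the paper's proof only establishes $l_{p}(\hat{F}(r,s)) \subseteq l_{q}(\hat{F}(r,s))$ and never exhibits an element of $l_{q}(\hat{F}(r,s))\setminus l_{p}(\hat{F}(r,s))$, even though the theorem asserts a proper inclusion. Your pull-back of $y_{n}=(n+1)^{-1/p}\in l_{q}\setminus l_{p}$ through the inverse formula (\ref{24}), combined with the fact that $T$ is a norm-preserving bijection (so membership of $x$ in either Fibonacci space is equivalent to membership of $y$ in the corresponding classical space), supplies precisely the witness the paper omits, and it costs nothing extra since no coordinatewise estimates on $x$ are needed. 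So your proof is not a different route but a completed version of the paper's route.
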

\begin{proof}
Let $1 \leq p < q$ and $x \in  l_{p}(\hat{F}(r,s)).$ Then we obtain from Theorem \ref{thm23} that $y \in l_{p},$ where  $y=\hat{F}(r,s)(x).$ We have $l_{p} \subset l_{q}$ which gives 
$y \in l_{q}.$ This means that $x \in  l_{q}(\hat{F}(r,s)).$ Hence we have 
$ l_{p}(\hat{F}(r,s)) \subset l_{q}(\hat{F}(r,s)).$
\end{proof}
\begin{thm}
If $\left| -\frac{s}{r} \right| \geq 1$ then the space $l_{\infty}$ does not include the space $ l_{p}(\hat{F}(r,s)).$
\end{thm}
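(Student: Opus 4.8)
The plan is to prove the non-inclusion by exhibiting a single explicit sequence that lies in $l_p(\hat{F}(r,s))$ yet fails to be bounded, which immediately gives $l_p(\hat{F}(r,s)) \not\subseteq l_\infty$. The natural witness is the very sequence already used in the preceding inclusion theorem, namely $x=(x_k)$ defined by $x_k = \frac{1}{r}\left(-\frac{s}{r}\right)^k f_{k+1}^2$. The previous theorem already records that this $x$ lies in $l_p(\hat{F}(r,s))$; the new content to establish is simply that it escapes $l_\infty$ whenever $\left|-\frac{s}{r}\right|\ge 1$.

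First I would confirm membership $x\in l_p(\hat{F}(r,s))$ for every $1\le p\le\infty$ at once. By Theorem \ref{thm23} this is equivalent to $y=\hat{F}(r,s)(x)\in l_p$, so it suffices to compute the transform from definition (\ref{22}). One gets $y_0=rx_0=1$, and for $n\ge 1$, writing $\frac{s}{r}=-\left(-\frac{s}{r}\right)$ so that $\frac{s}{r}\left(-\frac{s}{r}\right)^{n-1}=-\left(-\frac{s}{r}\right)^{n}$, the two contributions to $y_n$ both reduce to $\left(-\frac{s}{r}\right)^{n}f_n f_{n+1}$ with opposite signs and cancel, giving $y_n=0$. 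Hence $\hat{F}(r,s)(x)=e^{(0)}$, which is finitely supported and therefore belongs to $l_p$ for all $p$, including $p=\infty$; thus $x\in l_p(\hat{F}(r,s))$.

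Next I would show $x\notin l_\infty$. Since $\left|-\frac{s}{r}\right|\ge 1$ we have $|x_k|=\frac{1}{|r|}\left|\frac{s}{r}\right|^{k}f_{k+1}^2\ge \frac{1}{|r|}f_{k+1}^2$, and because the Fibonacci numbers diverge, $f_{k+1}^2\to\infty$. Hence $(x_k)$ is unbounded, so $x\notin l_\infty$. Combining the two steps, $x\in l_p(\hat{F}(r,s))\setminus l_\infty$, so $l_\infty$ does not include $l_p(\hat{F}(r,s))$.

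I do not expect a serious obstacle: the witness is inherited from the earlier argument, and the only genuine computation is the sign-cancellation showing $\hat{F}(r,s)(x)=e^{(0)}$. The mild point needing care is treating finite $p$ and $p=\infty$ uniformly, but since $e^{(0)}\in l_p$ for every $1\le p\le\infty$, the single argument covers all cases simultaneously.
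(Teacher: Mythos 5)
Your proposal is correct and follows essentially the same route as the paper: the same witness $x_k=\frac{1}{r}\left(-\frac{s}{r}\right)^k f_{k+1}^2$, the same observation that $\hat{F}(r,s)(x)=(1,0,0,\dots)\in l_p$ while $f_{k+1}^2\to\infty$ forces $x\notin l_\infty$. The only difference is that you carry out the sign-cancellation computation explicitly, which the paper merely asserts.
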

\begin{proof}
Let $\left| -\frac{s}{r} \right| \geq 1$ and $x=(x_{k})= \left(\frac{1}{r}\left( -\frac{s}{r}\right)^{k}  f_{k+1}^{2} \right). $ We know that $ f_{k+1}^{2} \rightarrow \infty$ as $k \rightarrow \infty$ and  $\hat{F}(r,s)(x)=(1,0,0,0,...).$ Therefore the sequence lies in
$ l_{p}(\hat{F}(r,s))$ but not in $l_{\infty}.$ This completes the proof. 
\end{proof}
\begin{thm}
If $\left| -\frac{s}{r} \right| \geq 1$ then the space $bv_{p}$ does not include the space $ l_{p}(\hat{F}(r,s)).$
\end{thm}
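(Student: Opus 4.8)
The plan is to reuse the very same witness sequence that settled the previous theorem, namely
\[
x=(x_k)=\left(\frac{1}{r}\left(-\frac{s}{r}\right)^{k}f_{k+1}^{2}\right),
\]
and to show that it lies in $l_{p}(\hat{F}(r,s))$ but fails to lie in $bv_{p}$. Recalling that $x\in bv_{p}$ precisely when $\sum_{k}\left|x_{k}-x_{k-1}\right|^{p}<\infty$ (with the convention $x_{-1}=0$), the membership of $x$ in $l_{p}(\hat{F}(r,s))$ is essentially already on record: substituting directly into (\ref{22}) gives $\hat{F}(r,s)_{0}(x)=rx_{0}=1$, while for $n\geq 1$ the two terms collapse to $f_{n}f_{n+1}\left(-\frac{s}{r}\right)^{n-1}\!\left(-\frac{s}{r}+\frac{s}{r}\right)=0$, so that $\hat{F}(r,s)(x)=(1,0,0,\dots)\in l_{p}$ for every $1\leq p\leq\infty$. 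Hence $x\in l_{p}(\hat{F}(r,s))$.

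The core of the argument is therefore to estimate the consecutive differences. Writing $\theta=-\frac{s}{r}$, so that $\left|\theta\right|\geq 1$ by hypothesis, I would compute
\[
x_{k}-x_{k-1}=\frac{\theta^{k-1}}{r}\bigl(\theta f_{k+1}^{2}-f_{k}^{2}\bigr),
\]
and take absolute values to obtain $\left|x_{k}-x_{k-1}\right|=\frac{\left|\theta\right|^{k-1}}{\left|r\right|}\bigl|\theta f_{k+1}^{2}-f_{k}^{2}\bigr|$. Since $\left|\theta\right|^{k-1}\geq 1$, it remains only to bound the bracket below. If $\theta\geq 1$ then $\bigl|\theta f_{k+1}^{2}-f_{k}^{2}\bigr|\geq f_{k+1}^{2}-f_{k}^{2}$, whereas if $\theta\leq -1$ then $\bigl|\theta f_{k+1}^{2}-f_{k}^{2}\bigr|=\left|\theta\right|f_{k+1}^{2}+f_{k}^{2}\geq f_{k+1}^{2}\geq f_{k+1}^{2}-f_{k}^{2}$; in either case the bracket is at least $f_{k+1}^{2}-f_{k}^{2}=f_{k-1}(f_{k+1}+f_{k})$, which tends to infinity with $k$. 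Consequently $\left|x_{k}-x_{k-1}\right|\to\infty$, so the general term of $\sum_{k}\left|x_{k}-x_{k-1}\right|^{p}$ does not tend to $0$ and the series diverges. This shows $x\notin bv_{p}$, and combined with $x\in l_{p}(\hat{F}(r,s))$ it proves that $bv_{p}$ does not include $l_{p}(\hat{F}(r,s))$.

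The only genuinely delicate point is the lower estimate on $\bigl|\theta f_{k+1}^{2}-f_{k}^{2}\bigr|$: one must treat the sign of $\theta$ (equivalently of $-\frac{s}{r}$) separately, since for positive $\theta$ the two Fibonacci squares could a priori come close to cancelling. The hypothesis $\left|\theta\right|\geq 1$ together with the strict growth $f_{k+1}>f_{k}$ rules this out and is exactly what forces the difference to blow up rather than remain bounded; I expect this case split to be the main thing requiring care, while the vanishing of $\hat{F}(r,s)(x)$ beyond its zeroth coordinate is just the routine cancellation already used in the preceding proof.
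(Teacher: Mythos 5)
Your proposal is correct and takes essentially the same route as the paper: the identical witness sequence $x_k=\frac{1}{r}\left(-\frac{s}{r}\right)^{k}f_{k+1}^{2}$, the same observation that $\hat{F}(r,s)(x)=(1,0,0,\dots)\in l_{p}$, and the same difference computation $x_k-x_{k-1}=\frac{1}{r}\left(-\frac{s}{r}\right)^{k-1}\left(-\frac{s}{r}f_{k+1}^{2}-f_{k}^{2}\right)$. The only difference is that where the paper simply asserts ``clearly $\Delta x\notin l_{p}$,'' you supply the sign case analysis showing the terms blow up, which is a welcome filling-in of detail rather than a different argument.
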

\begin{proof}
Let $\left| -\frac{s}{r} \right| \geq 1$ and $x=(x_{k})= \left(\frac{1}{r}\left( -\frac{s}{r}\right)^{k}  f_{k+1}^{2} \right) .$ We know that $ f_{k+1}^{2} \rightarrow \infty$ as $k \rightarrow \infty$ and  $\hat{F}(r,s)(x)=(1,0,0,0,...)$ and
$\Delta x=(\Delta x_{k})=
\left( -\frac{1}{r}\left(  -\frac{s}{r}\right)^{k-1}\left( 
\frac{s}{r}f_{k+1}^{2}+f_{k}^{2} \right)   \right). $ Clearly for
$\left| -\frac{s}{r} \right| \geq 1,$ $\Delta x \notin l_{p}.$
Therefore the sequence lies in
$ l_{p}(\hat{F}(r,s))$ but not in $bv_{p}.$ This completes the proof. 
\end{proof}
\begin{lem} \cite{A}
Let $\lambda$ be a BK-space including the space $\phi.$ Then
$\lambda$ is solid if and only if $l_{\infty}\lambda \subset \lambda.$
\end{lem}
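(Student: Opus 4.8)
This is a standard characterization of solidity (normality) of a sequence space, so the plan is to give a direct set-theoretic argument establishing the two implications separately. Throughout, $l_{\infty}\lambda$ denotes the set of coordinatewise products $\left\lbrace (a_{k}x_{k}): (a_{k})\in l_{\infty},\ (x_{k})\in\lambda\right\rbrace$, and the guiding idea is that a sequence dominated coordinatewise by a member of $\lambda$ is precisely a bounded multiplier times that member. I expect that the hypotheses that $\lambda$ be a $BK$-space containing $\phi$ will play no essential role in the equivalence itself; they only fix the ambient setting in which the lemma is later applied.

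For the forward implication I would assume $\lambda$ is solid and take an arbitrary $u=(u_{k})=(a_{k}x_{k})\in l_{\infty}\lambda$ with $(a_{k})\in l_{\infty}$ and $(x_{k})\in\lambda$. Since $|u_{k}|=|a_{k}|\,|x_{k}|\le \|a\|_{\infty}|x_{k}|$ for every $k$ and $\|a\|_{\infty}x\in\lambda$ by linearity of $\lambda$, the sequence $u$ lies in $\tilde{\lambda}$; solidity then yields $u\in\lambda$. As $u$ was arbitrary this gives $l_{\infty}\lambda\subset\lambda$.

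For the converse I would assume $l_{\infty}\lambda\subset\lambda$ and pick $u\in\tilde{\lambda}$, so that $|u_{k}|\le|x_{k}|$ for all $k$ and some $x=(x_{k})\in\lambda$. The key step is to manufacture a bounded multiplier: set $a_{k}=u_{k}/x_{k}$ when $x_{k}\neq 0$ and $a_{k}=0$ otherwise. Then $|a_{k}|\le 1$ at every index, so $a=(a_{k})\in l_{\infty}$, and $u=ax$, whence $u\in l_{\infty}\lambda\subset\lambda$; thus $\tilde{\lambda}\subset\lambda$ and $\lambda$ is solid.

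The only delicate point is the behaviour at indices where $x_{k}=0$ in the converse direction: there the ratio defining the multiplier is meaningless, but the domination $|u_{k}|\le|x_{k}|$ forces $u_{k}=0$ on precisely those indices, so the choice $a_{k}=0$ is consistent and still produces $u=ax$. Apart from this small bookkeeping the argument is routine, and in particular no use is made of completeness, of the continuity of the coordinate functionals, or of the inclusion $\phi\subset\lambda$.
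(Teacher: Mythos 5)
Your proof is correct. Note that the paper itself offers no proof of this lemma: it is quoted from Altay and Ba\c{s}ar \cite{A} as a known result, so there is no internal argument to compare against, and your write-up supplies exactly the details the paper leaves to the reference. Both of your implications check out: in the forward direction the dominating sequence $\|a\|_{\infty}x$ lies in $\lambda$ because $\lambda$ is a linear subspace of $\omega$, so $ax\in\tilde{\lambda}\subset\lambda$; in the converse the multiplier $a_{k}=u_{k}/x_{k}$ (set to $0$ when $x_{k}=0$) satisfies $|a_{k}|\leq 1$ and reproduces $u=ax$, the one delicate point being the indices with $x_{k}=0$, where, as you correctly observe, the domination $|u_{k}|\leq|x_{k}|$ forces $u_{k}=0$. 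Your closing remark is also accurate and worth making explicit: the equivalence is purely algebraic, using only closure of $\lambda$ under coordinatewise scalar operations, so the hypotheses that $\lambda$ be a BK-space containing $\phi$ are not needed for the equivalence itself; they simply describe the setting in which the lemma is invoked later (the spaces $l_{p}(\hat{F}(r,s))$, which do satisfy them). The only thing your argument buys beyond the paper is self-containedness; conversely, the paper's citation-only treatment is standard since the result is folklore.
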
  
\begin{thm}
 The space $l_{p}(\hat{F}(r,s)),1 \leq p \leq \infty $ is solid.
 \begin{proof}
 Let the sequences $x=(x_{k}) \in l_{p}(\hat{F}(r,s))$ and $y=(y_{k}) \in l_{\infty}.$ We have the following conditions
 \[ \sum_{k}\left|r\frac{f_{k}}{f_{k+1}}x_{k}+s\frac{f_{k+1}}{f_{k}}x_{k-1}\right|^{p} < \infty  \mbox{~for~} 1 \leq p < \infty\] and 
 $\sup_{k}\mid y_{k}\mid < \infty.$\\
  So there exists a non-negative real number $M$ such that 
 $\sup_{k}\mid y_{k}\mid =M.$
 \par 
 Consider any element $yx=(y_{k}x_{k})$ $\in l_{\infty}l_{p}(\hat{F}(r,s)).$\\ Now
 \[\sum_{k}\left|r\frac{f_{k}}{f_{k+1}}y_{k}x_{k}+s\frac{f_{k+1}}{f_{k}}y_{k-1}x_{k-1}\right|^{p}
 \leq M^{p} \sum_{k}\left| r\frac{f_{k}}{f_{k+1}}x_{k}+s\frac{f_{k+1}}{f_{k}}x_{k-1}\right|^{p} < \infty.\]
 Therefore $yx \in l_{p}(\hat{F}(r,s)),$ which implies $l_{\infty}l_{p}(\hat{F}(r,s)) \subset l_{p}(\hat{F}(r,s))$ for $1 \leq p < \infty .$ Similarly, we can show that 
 $l_{\infty}l_{\infty}(\hat{F}(r,s)) \subset l_{\infty}(\hat{F}(r,s)),$ which completes the proof.
 \end{proof}
\end{thm}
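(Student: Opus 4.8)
The plan is to invoke the preceding lemma, which states that a $BK$-space $\lambda$ containing $\phi$ is solid if and only if $l_{\infty}\lambda \subset \lambda$. I have already established in the earlier theorem that $l_{p}(\hat{F}(r,s))$ is a $BK$-space, and it clearly contains $\phi$ (every finitely supported sequence has a convergent, indeed finite, $\hat{F}(r,s)$-transform). Thus the entire problem reduces to verifying the single containment $l_{\infty}\,l_{p}(\hat{F}(r,s)) \subset l_{p}(\hat{F}(r,s))$ for each $p$ in the range $1 \leq p \leq \infty$.

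First I would fix $x = (x_{k}) \in l_{p}(\hat{F}(r,s))$ and $y = (y_{k}) \in l_{\infty}$, and set $M = \sup_{k}|y_{k}| < \infty$. The goal is to show that the coordinatewise product $yx = (y_{k}x_{k})$ again lies in $l_{p}(\hat{F}(r,s))$, i.e. that its $\hat{F}(r,s)$-transform is in $l_{p}$. For the case $1 \leq p < \infty$, I would write out $\hat{F}(r,s)_{k}(yx) = r\frac{f_{k}}{f_{k+1}}y_{k}x_{k} + s\frac{f_{k+1}}{f_{k}}y_{k-1}x_{k-1}$ and bound each term using $|y_{k}| \leq M$ and $|y_{k-1}| \leq M$, so that $|\hat{F}(r,s)_{k}(yx)| \leq M\,|\hat{F}(r,s)_{k}(x)|$ pointwise. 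Raising to the $p$-th power and summing over $k$ then yields $\sum_{k}|\hat{F}(r,s)_{k}(yx)|^{p} \leq M^{p}\sum_{k}|\hat{F}(r,s)_{k}(x)|^{p} < \infty$, which is precisely the statement $yx \in l_{p}(\hat{F}(r,s))$. For $p = \infty$ the same pointwise bound gives $\sup_{k}|\hat{F}(r,s)_{k}(yx)| \leq M\sup_{k}|\hat{F}(r,s)_{k}(x)| < \infty$, handling the $l_{\infty}(\hat{F}(r,s))$ case identically.

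The one place requiring a little care is the factoring step: in $\hat{F}(r,s)_{k}(yx)$ the two summands carry \emph{different} multipliers $y_{k}$ and $y_{k-1}$, so one cannot literally pull out a single scalar. Instead I would use the triangle inequality together with the uniform bound $|y_{k}|, |y_{k-1}| \leq M$ to estimate $|\hat{F}(r,s)_{k}(yx)| \leq M\bigl(|r\frac{f_{k}}{f_{k+1}}x_{k}| + |s\frac{f_{k+1}}{f_{k}}x_{k-1}|\bigr)$ and then recover $|r\frac{f_{k}}{f_{k+1}}x_{k} + s\frac{f_{k+1}}{f_{k}}x_{k-1}|$ on the right. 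Since this is the only genuinely delicate point, I anticipate it being the main (though modest) obstacle, and it is easily dispatched by the triangle inequality as just described.

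Having verified the containment $l_{\infty}\,l_{p}(\hat{F}(r,s)) \subset l_{p}(\hat{F}(r,s))$ in both the finite-$p$ and $p=\infty$ cases, the preceding lemma immediately yields that $l_{p}(\hat{F}(r,s))$ is solid for all $1 \leq p \leq \infty$, completing the proof.
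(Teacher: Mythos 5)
You have correctly isolated the delicate point --- in $\hat{F}(r,s)_{k}(yx)$ the two summands carry the different multipliers $y_{k}$ and $y_{k-1}$ --- but your proposed fix does not work, and this is a genuine gap, not a technicality. The triangle inequality gives
\[
\left|\hat{F}(r,s)_{k}(yx)\right| \le M\left(\left|r\tfrac{f_{k}}{f_{k+1}}x_{k}\right|+\left|s\tfrac{f_{k+1}}{f_{k}}x_{k-1}\right|\right),
\]
but you cannot then ``recover'' $\left|r\tfrac{f_{k}}{f_{k+1}}x_{k}+s\tfrac{f_{k+1}}{f_{k}}x_{k-1}\right|$ on the right: the inequality $|a|+|b|\ge |a+b|$ goes the wrong way, so $M(|a_{k}|+|b_{k}|)$ is an upper bound that can be far larger than $M|a_{k}+b_{k}|$, and $\sum_{k}(|a_{k}|+|b_{k}|)^{p}$ can diverge while $\sum_{k}|a_{k}+b_{k}|^{p}$ converges. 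That is exactly what happens here. Take $x_{k}=\frac{1}{r}\left(-\frac{s}{r}\right)^{k}f_{k+1}^{2}$ (the sequence used repeatedly in the paper; its $\hat{F}(r,s)$-transform is $e^{(0)}$, so $x\in l_{p}(\hat{F}(r,s))$ for every $p$) and $y_{k}=(-1)^{k}\in l_{\infty}$ with $M=1$. A direct computation gives, for $k\ge 1$,
\[
\hat{F}(r,s)_{k}(yx)=2\left(\frac{s}{r}\right)^{k}f_{k}f_{k+1},
\]
which is unbounded whenever $\left|\frac{s}{r}\right|\alpha^{2}>1$ (in particular whenever $|s|\ge|r|$, e.g.\ $r=1$, $s=-1$), since $f_{k}f_{k+1}$ grows geometrically with ratio $\alpha^{2}$. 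Hence $yx\notin l_{p}(\hat{F}(r,s))$ for any $p\le\infty$, the inclusion $l_{\infty}\,l_{p}(\hat{F}(r,s))\subset l_{p}(\hat{F}(r,s))$ fails, and by the very lemma you invoke the space is \emph{not} solid for such $r,s$. No repair of the factoring step is possible, because the statement itself is false in this generality.

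For what it is worth, the paper's own proof commits the same error: it simply asserts
\[
\sum_{k}\left|r\tfrac{f_{k}}{f_{k+1}}y_{k}x_{k}+s\tfrac{f_{k+1}}{f_{k}}y_{k-1}x_{k-1}\right|^{p}\le M^{p}\sum_{k}\left|r\tfrac{f_{k}}{f_{k+1}}x_{k}+s\tfrac{f_{k+1}}{f_{k}}x_{k-1}\right|^{p},
\]
i.e.\ it pulls the single constant $M$ out of a sum whose terms involve two different values of $y$, with no justification; the counterexample above violates this inequality. So your proposal essentially reproduces the paper's argument, including its fatal step --- the difference is only that you noticed the step required justification and then supplied an invalid one, whereas the paper did not notice it at all.
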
 

\par Now we give a sequence of points of the space $l_{p}(\hat{F}(r,s))$ which will form the basis for the space
$l_{p}(\hat{F}(r,s))$ for $1 \leq p < \infty.$
\begin{thm}
Let $1 \leq p < \infty $ and define the sequence $ c^{(n)} \in l_{p}(\hat{F}(r,s)) $ for every fixed $ n \in \mathbb{N}$ by
\begin{equation}
(c^{(n)})_{k}
= \left\{
        \begin{array}{ll}
           0 , 
           &  0 \leq k \leq n-1\\
            \frac{1}{r}.\left( -\frac{s}{r}\right)^{k-n}. \frac{f_{k+1}^{2}}{f_{n}f_{n+1}} ,
           &  k \geq n
        \end{array}
    \right.
  \label{26}  
\end{equation}
 where $ n \in \mathbb{N}.$ Then the sequence $ (c^{(n)})_{n=0}^{\infty}$ is a basis for the space $ l_{p}(\hat{F}(r,s)),$ and every $  x \in  l_{p}(\hat{F}(r,s))$ has a unique representation of the form 
\begin{equation}
 x= \sum\limits_{n} \hat{F}(r,s)_{n}(x)c^{(n)}.
 \label{27}
 \end{equation}
\end{thm}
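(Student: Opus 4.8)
The plan is to exploit the linear isomorphism $T\colon l_{p}(\hat{F}(r,s))\to l_{p}$ established in Theorem \ref{thm23}, which sends $x$ to $y=\hat{F}(r,s)(x)$ and is norm-preserving. The key observation is that $T$ maps the proposed basis elements $c^{(n)}$ onto the standard unit vectors $e^{(n)}$ of $l_{p}$. Concretely, applying the inverse formula (\ref{23})--(\ref{24}) to $y=e^{(n)}$ yields exactly the sequence $c^{(n)}$ defined in (\ref{26}); this is a direct substitution into (\ref{24}) with $y_{j}=\delta_{jn}$, so that the only surviving term is $j=n$, giving $(c^{(n)})_{k}=\frac{1}{r}(-\frac{s}{r})^{k-n}\frac{f_{k+1}^{2}}{f_{n}f_{n+1}}$ for $k\ge n$ and $0$ otherwise. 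Thus I would first verify $\hat{F}(r,s)(c^{(n)})=e^{(n)}$, equivalently $T(c^{(n)})=e^{(n)}$, which simultaneously confirms $c^{(n)}\in l_{p}(\hat{F}(r,s))$.

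Next I would transport the basis property across the isomorphism. Since $(e^{(n)})_{n=0}^{\infty}$ is a Schauder basis for $l_{p}$ (for $1\le p<\infty$) and $T$ is a linear isometric bijection, its inverse $T^{-1}$ is continuous, so the image $(T^{-1}e^{(n)})_{n}=(c^{(n)})_{n}$ is a Schauder basis for $l_{p}(\hat{F}(r,s))$. The explicit expansion then follows: given $x\in l_{p}(\hat{F}(r,s))$, set $y=Tx$, write $y=\sum_{n}y_{n}e^{(n)}=\sum_{n}\hat{F}(r,s)_{n}(x)\,e^{(n)}$ in $l_{p}$, and apply the continuous map $T^{-1}$ termwise to obtain the representation (\ref{27}).

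The cleanest way to present the convergence rigorously is to estimate the tail norm directly. For a fixed $m$, I would compute $\hat{F}(r,s)\bigl(x-\sum_{n=0}^{m}\hat{F}(r,s)_{n}(x)c^{(n)}\bigr)$ and show, using $T(c^{(n)})=e^{(n)}$ and linearity, that this equals $(0,\dots,0,y_{m+1},y_{m+2},\dots)$. Hence
\[
\Bigl\|\,x-\sum_{n=0}^{m}\hat{F}(r,s)_{n}(x)\,c^{(n)}\Bigr\|_{l_{p}(\hat{F}(r,s))}
=\Bigl(\sum_{n=m+1}^{\infty}|y_{n}|^{p}\Bigr)^{1/p},
\]
and since $y\in l_{p}$ the right-hand side tends to $0$ as $m\to\infty$, giving convergence of the series to $x$.

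Uniqueness is the last step and follows from injectivity of $T$: if $x=\sum_{n}\lambda_{n}c^{(n)}$ for scalars $\lambda_{n}$, then applying $T$ gives $y=Tx=\sum_{n}\lambda_{n}e^{(n)}$ in $l_{p}$, and uniqueness of the coordinate expansion in $l_{p}$ forces $\lambda_{n}=y_{n}=\hat{F}(r,s)_{n}(x)$ for every $n$. I expect the main obstacle to be the bookkeeping in verifying $\hat{F}(r,s)(c^{(n)})=e^{(n)}$, that is, checking that the band-matrix action on the explicit geometric-type sequence $c^{(n)}$ telescopes correctly at the diagonal entry $k=n$ (producing $1$) and cancels for $k>n$ (producing $0$); this is the one genuinely computational point, and it is precisely the content already encoded in the inverse relation (\ref{23}).
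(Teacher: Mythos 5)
Your proposal is correct and follows essentially the same route as the paper: verify $\hat{F}(r,s)(c^{(n)})=e^{(n)}$ via the inverse matrix (\ref{23})--(\ref{24}), establish convergence of the expansion through the tail-norm computation $\|x-\sum_{n=0}^{m}\hat{F}(r,s)_{n}(x)c^{(n)}\|_{l_{p}(\hat{F}(r,s))}=(\sum_{n=m+1}^{\infty}|y_{n}|^{p})^{1/p}\to 0$, and obtain uniqueness by applying the continuous map $T=\hat{F}(r,s)$ to the series and invoking uniqueness of coordinates in $l_{p}$. Your ``transport of a Schauder basis under an isomorphism'' framing is a slightly more abstract packaging, but the concrete steps you then carry out coincide with the paper's proof.
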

\begin{proof}
Let $ 1 \leq p < \infty .$ It is obvious by that $ \hat{F}(r,s)(c^{(n)})=e^{(n)} \in l_{p}\;(k \in \mathbb{N})$ and hence $ c^{(n)} \in l_{p}(\hat{F}(r,s)) $ for all $ k \in \mathbb{N}.$
\par Further, let $ x \in l_{p}(\hat{F}(r,s)) .$ For any non-negative integer $ m,$ we put 
$ x^{(m)}= \sum\limits_{n=0}^{m} \hat{F}(r,s)_{n}(x)c^{(n)}.$
\par Then we have that
\[ \hat{F}(r,s)(x^{(m)}) = \sum\limits_{n=0}^{m}\hat{F}(r,s)_{n}(x)\hat{F}(r,s)(c^{(n)})= \sum\limits_{n=0}^{m}\hat{F}(r,s)_{n}(x)e^{(n)} \]
and hence  \[ \hat{F}(r,s)_{k}(x-x^{(m)})
= \left\{
        \begin{array}{ll}
           0 , 
           & 0 \leq k \leq m \\
            \hat{F}(r,s)_{k}(x)  ,
           &  k > m
        \end{array}
    \right. \] where $ k,m \in \mathbb{N}.$\\
 For any given $\epsilon >0,$ there is a non-negative integer $ m_{0}$ such that \[\sum\limits_{n=m_{0}+1}^{\infty}\left| \hat{F}(r,s)_{n}(x)\right|^{p} \leq \left( \frac{\epsilon}{2} \right)^{p}.\]
Therefore we have for every $ m \geq m_{0} $ that
\[ \parallel x-x^{(m)}\parallel_{l_{p}(\hat{F}(r,s))}\]
\[ =  
\left( \sum\limits_{n=m+1}^{\infty}\left|\hat{F}(r,s)_{n}(x)\right|^{p}\right)^{1/p}\]
\[ \leq
\left( \sum\limits_{n=m_{0}+1}^{\infty}\left|\hat{F}(r,s)_{n}(x)\right|^{p}\right)^{1/p} \leq \frac{\epsilon}{2}
 < \epsilon \]
which shows that
 $ \lim\limits_{m \rightarrow \infty}\parallel x-x^{(m)}\parallel_{l_{p}(\hat{F}(r,s))}=0 $ and hence $x$ is represented as in (\ref{27}).
 \par Now we are going to show the uniqueness of the representation (\ref{27}) of $x \in l_{p}(\hat{F}(r,s)).$ Let $x= \sum\limits_{k} \mu_{k}(x)c^{(k)}.$  We have $  \hat{F}(r,s)$ is a linear mapping from $ l_{p}(\hat{F}(r,s))$ to $l_{p}.$ Since any matrix mapping between FK spaces is continuous, so $  \hat{F}(r,s)$ is continuous.\\
 Now 
 \[\hat{F}(r,s)_{n}(x)=\sum\limits_{k}\mu_{k}(x)\hat{F}(r,s)_{n}(c^{(k)})=\mu_{n}(x)\quad (n\in \mathbb{N}).\] 
 Hence the representation (\ref{27}) is unique.
\end{proof}

\section{ The $\alpha$-,$\beta$- and $\gamma$-duals of the space $l_{p}(\hat{F}(r,s))$}
In this section, we determine the $\alpha-,$ $\beta-$ and $\gamma-$duals of the sequence space $l_{p}(\hat{F}(r,s)).$ Since the case $p=1$ can be proved by analogy, we omit the proof of that case and consider only the case $1 <p \leq \infty.$
\begin{thm}
The $\alpha$-dual of the sequence space 
$ l_{p}(\hat{F}(r,s))$ is the set \\
$ d_{1}=\left\lbrace a=(a_{k}) \in \omega :  
\sup_{K \in \mathcal{F}} \sum\limits_{k}\left| \sum\limits_{n \in K} b_{nk}\right|^{q}  < \infty, 
q=\frac{p}{p-1}
 \right\rbrace $
  where $1<p\leq \infty$ and the matrix $B=(b_{nk})$ 
is defined as follows \\
 $$ b_{nk}
= \left\{
        \begin{array}{ll}
          \frac{1}{r}\left( -\frac{s}{r}\right)^{n-k}  \frac{f_{n+1}^{2}}{f_{k}f_{k+1}}a_{n} , 
           & 0 \leq k \leq n \\
            0  ,
           &  k > n
        \end{array}
  \right. $$ for all $n,k \in \mathbb{N}$
  and $a=(a_{n}) \in \omega.$
\end{thm}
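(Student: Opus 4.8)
The plan is to compute the $\alpha$-dual directly from its definition, namely to determine for which sequences $a=(a_n)\in\omega$ the product $ax=(a_nx_n)\in l_1$ for every $x\in l_p(\hat F(r,s))$. The natural first step is to invoke the representation of $x$ in terms of the $\hat F(r,s)$-transform established in $(\ref{24})$: for $x\in l_p(\hat F(r,s))$ we have $y=\hat F(r,s)(x)\in l_p$ by Theorem~\ref{thm23}, and conversely $x_k=\sum_{j=0}^{k}\frac{1}{r}\left(-\frac{s}{r}\right)^{k-j}\frac{f_{k+1}^2}{f_jf_{j+1}}y_j$. Substituting this into $a_nx_n$ gives
\[
a_nx_n=a_n\sum_{k=0}^{n}\frac{1}{r}\left(-\frac{s}{r}\right)^{n-k}\frac{f_{n+1}^2}{f_kf_{k+1}}y_k=\sum_{k=0}^{n}b_{nk}y_k=B_n(y),
\]
where $B=(b_{nk})$ is exactly the matrix defined in the statement. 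This is the key reduction: the condition $ax\in l_1$ for all $x\in l_p(\hat F(r,s))$ is equivalent to $By\in l_1$ for all $y\in l_p$, i.e. to $B\in(l_p,l_1)$.

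Once this reformulation is in place, I would simply apply the known characterization of the matrix class $(l_p,l_1)$. For the range $1<p<\infty$ this is part~(b) of Lemma~\ref{lem11}, which asserts that $B\in(l_p,l_1)$ if and only if condition $(\ref{19})$ holds, namely $\sup_{K\in\mathcal F}\sum_k\left|\sum_{n\in K}b_{nk}\right|^q<\infty$ with $q=\frac{p}{p-1}$. This is precisely the defining condition of the set $d_1$, so the equivalence $a\in l_p(\hat F(r,s))^\alpha\iff a\in d_1$ follows immediately for $1<p<\infty$. The case $p=\infty$ is handled separately using the corresponding characterization of $(l_\infty,l_1)$ (with the convention $q=1$), matching the $\sup$ over finite subsets condition in the same form.

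The main obstacle, and the step requiring the most care, is justifying the interchange that converts the condition ``$ax\in l_1$ for all $x\in l_p(\hat F(r,s))$'' into ``$By\in l_1$ for all $y\in l_p$''. One direction is transparent: since the correspondence $x\leftrightarrow y$ given by $T=\hat F(r,s)$ is a linear bijection between $l_p(\hat F(r,s))$ and $l_p$ by Theorem~\ref{thm23}, every $x\in l_p(\hat F(r,s))$ arises from a unique $y\in l_p$ and vice versa, so the quantifiers transfer cleanly and the computation $a_nx_n=B_n(y)$ above shows $ax\in l_1\iff By\in l_1$ term by term. The delicate point is ensuring the series defining $x_n$, and hence the rearrangement producing $B_n(y)$, is absolutely convergent so that the finite sum manipulation is valid for each $n$; this is automatic here because the sum defining each $x_n$ is finite (the matrix $\hat F(r,s)^{-1}$ is triangular), which removes any convergence worries. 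I would therefore present the argument as: reduce via $(\ref{24})$ to $B\in(l_p,l_1)$, then quote Lemma~\ref{lem11}(b) to identify the condition with membership in $d_1$, concluding $l_p(\hat F(r,s))^\alpha=d_1$.
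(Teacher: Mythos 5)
Your proposal is correct and follows essentially the same route as the paper's own proof: substitute the inverse-transform formula $(\ref{24})$ into $a_nx_n$ to obtain $a_nx_n=B_n(y)$, transfer the quantifiers through the isomorphism of Theorem~\ref{thm23}, and conclude via Lemma~\ref{lem11}(b) that $\{l_p(\hat F(r,s))\}^{\alpha}=d_1$. Your added remarks on the finiteness of the sums and the separate treatment of $p=\infty$ are sound refinements of the same argument rather than a different approach.
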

\begin{proof}
Let $a=(a_{n}) \in \omega.$ Also for every $ x=(x_{n})\in \omega, $ we put $y=(y_{n})=\hat{F}(r,s)(x).$ Then it follows by (\ref{24}) that $x_{k}=\sum\limits_{j=0}^{k}\frac{1}{r}\left( -\frac{s}{r}\right)^{k-j}\frac{f_{k+1}^{2}}{f_{j}f_{j+1}}y_{j} $ and 
\begin{equation}
B_{n}(y)=\sum\limits_{k=0}^{n}b_{nk}y_{k}=
\sum\limits_{k=0}^{n}\frac{1}{r}\left( -\frac{s}{r}\right)^{n-k}\frac{f_{n+1}^{2}}{f_{k}f_{k+1}}a_{n}y_{k}=a_{n}x_{n} .
\label{31}
\end{equation}
where $ n \in \mathbb{N}.$
\par Thus we observe by (\ref{31}) that $ax=(a_{n}x_{n})\in l_{1}$ whenever $x \in l_{p}(\hat{F}(r,s))$ if and only if $By \in l_{1}$ whenever  $y \in l_{p}.$ Therefore we derive by using the Lemma \ref{lem11} that\\
$ \sup_{K \in \mathcal{F}} \sum\limits_{k}\left| \sum\limits_{n \in K} b_{nk}\right|^{q}<\infty$ which implies that $\left\lbrace l_{p}(\hat{F}(r,s))\right\rbrace^{\alpha}=d_{1}. $
\end{proof}     
\begin{thm}
Define the sets $d_{2},d_{3}$ and $d_{4}$ by \\
$ d_{2}=\left\lbrace a=(a_{k}) \in \omega : \sup_{n} \sum\limits_{k} \mid d_{nk}\mid ^{q} < \infty , q=\frac{p}{p-1}  \right\rbrace, $ \\
$ d_{3}=\left\lbrace a=(a_{k}) \in \omega : \lim_{n}d_{nk}\mbox{~exists~} \forall \: k\right\rbrace, $
\\ and
$ d_{4}=\left\lbrace a=(a_{k}) \in \omega : \lim_{n} \sum\limits_{k=0}^{n} \mid d_{nk} \mid =
\sum\limits_{k} \mid \lim_{n}d_{nk} \mid \right\rbrace. $ \\
 
 Then $ \left\lbrace  l_{p}(\hat{F}(r,s))\right\rbrace ^{\beta}=d_{2}\cap d_{3} $ and 
 $ \left\lbrace  l_{\infty}(\hat{F}(r,s))\right\rbrace ^{\beta}=d_{2}\cap d_{4}$ where $1 < p < \infty$ and $D=(d_{nk})$ is defined by \\
  $$ d_{nk}
 = \left\{
         \begin{array}{ll}
           \sum\limits_{j=k}^{n}\frac{1}{r}\left( -\frac{s}{r}\right)^{j-k}  \frac{f_{j+1}^{2}}{f_{k}f_{k+1}}a_{n} , 
            & 0 \leq k \leq n \\
             0  ,
            &  k > n
         \end{array}
   \right. $$ for all $n,k \in \mathbb{N}.$
\end{thm}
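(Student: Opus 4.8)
The plan is to convert the computation of the $\beta$-dual into a matrix--transformation problem and then read the answer off from Lemma~\ref{lem11}. By definition $a=(a_k)$ lies in $\{l_p(\hat F(r,s))\}^{\beta}$ precisely when the series $\sum_k a_k x_k$ converges for every $x\in l_p(\hat F(r,s))$. The isomorphism $T$ of Theorem~\ref{thm23} lets me replace the quantifier ``for every $x$ in the space'' by ``for every $y\in l_p$'', where $y=\hat F(r,s)(x)$, since $T$ is a norm-preserving bijection. Thus I expect the whole theorem to follow once the partial sums $\sum_{k=0}^{n}a_k x_k$ are rewritten as $D_n(y)$ for a suitable matrix $D$.

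First I would fix $a\in\omega$ and $x\in\omega$, set $y=\hat F(r,s)(x)$, and substitute the inversion formula (\ref{24}) for $x_k$ into $\sum_{k=0}^{n}a_k x_k$. This produces a finite double sum over the triangular index set $0\le j\le k\le n$, which I would reorder so as to collect the coefficient of each $y_j$. The reordering is legitimate because only finitely many terms are involved, and it yields $\sum_{k=0}^{n}a_k x_k=\sum_{k=0}^{n}d_{nk}y_k=D_n(y)$ with $d_{nk}=\sum_{j=k}^{n}\frac1r\bigl(-\frac sr\bigr)^{j-k}\frac{f_{j+1}^{2}}{f_k f_{k+1}}a_j$ for $0\le k\le n$ and $d_{nk}=0$ for $k>n$, which is exactly the matrix $D$ in the statement. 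Since $d_{nk}=0$ for $k>n$, each $D_n(y)$ is a finite sum and is always defined, so no convergence question arises at this stage.

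With this identity in hand, $(a_k x_k)\in cs$ for all $x\in l_p(\hat F(r,s))$ is equivalent to $(D_n(y))_n\in c$ for all $y\in l_p$, that is, to $D\in(l_p,c)$. For $1<p<\infty$ I would invoke Lemma~\ref{lem11}(a): $D\in(l_p,c)$ iff conditions (\ref{17}) and (\ref{18}) hold for the entries $d_{nk}$, and these are precisely the defining conditions of $d_2$ and $d_3$; hence $\{l_p(\hat F(r,s))\}^{\beta}=d_2\cap d_3$. For $l_\infty(\hat F(r,s))$ the same identity reduces matters to $D\in(l_\infty,c)$, and Lemma~\ref{lem11}(c) characterises this by (\ref{18}) and (\ref{20}); reading these off for $D$ produces the set $d_4$, whose very statement presupposes that each $\lim_n d_{nk}$ exists and forces $\sup_n\sum_k|d_{nk}|<\infty$ (i.e. $d_2$ with $q=1$), so the dual may be written $d_2\cap d_4$ as stated.

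The routine part is the algebra; the one step needing care is the reordering of the double sum, since the exponent $j-k$ and the ratio $f_{j+1}^{2}/(f_k f_{k+1})$ must survive the change of summation index so that the resulting $d_{nk}$ matches the stated matrix exactly. The other point I would watch is the bookkeeping between the two cases: part (a) of Lemma~\ref{lem11} applies for finite $p$ and part (c) for $p=\infty$, and one must verify that the conditions they produce coincide with $d_2,d_3$ in the former case and with $d_4$ in the latter. The case $p=1$ is handled by the paper's stated convention and is omitted.
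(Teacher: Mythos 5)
Your proposal is correct and follows essentially the same route as the paper's own proof: the same rearrangement of the finite double sum $\sum_{k=0}^{n}a_{k}x_{k}$ via the inversion formula (\ref{24}) into $D_{n}(y)$, the same reduction through the isomorphism of Theorem \ref{thm23}, and the same appeal to Lemma \ref{lem11}(a) for $1<p<\infty$ and Lemma \ref{lem11}(c) for $p=\infty$. Your closing remark that $d_{4}$ presupposes the existence of $\lim_{n}d_{nk}$ and forces $\sup_{n}\sum_{k}|d_{nk}|<\infty$ cleanly reconciles the lemma's output $d_{3}\cap d_{4}$ with the stated $d_{2}\cap d_{4}$, a point the paper itself leaves implicit (its proof actually ends with $d_{3}\cap d_{4}$); you also correctly write $a_{j}$ rather than the paper's typographical $a_{n}$ inside the defining sum for $d_{nk}$.
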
     
\begin{proof}
Let $a=(a_{k})\in \omega$ and consider the equality 
\begin{equation}
\sum\limits_{k=0}^{n} a_{k}x_{k}
=\sum\limits_{k=0}^{n} a_{k}\left( \sum\limits_{j=0}^{k}\frac{1}{r}\left( -\frac{s}{r}\right)^{k-j}  \frac{f_{k+1}^{2}}{f_{j}f_{j+1}}y_{j}
 \right) 
=\sum\limits_{k=0}^{n}\left( \sum\limits_{j=k}^{n}\frac{1}{r}\left( -\frac{s}{r}\right)^{j-k} \frac{f_{j+1}^{2}}{f_{k}f_{k+1}}a_{j} \right)y_{k} =D_{n}(y)
\label{32}
\end{equation}
where $D=(d_{nk})$ is defined by
$$ d_{nk}
 = \left\{
         \begin{array}{ll}
           \sum\limits_{j=k}^{n}\frac{1}{r}\left( -\frac{s}{r}\right)^{j-k}  \frac{f_{j+1}^{2}}{f_{k}f_{k+1}}a_{n} , 
            & 0 \leq k \leq n \\
             0  ,
            &  k > n
         \end{array}
   \right. $$ where $ n,k \in \mathbb{N}.$ Then we deduce from Lemma \ref{lem11} that $ ax=(a_{k}x_{k}) \in cs$ whenever $x=(x_{k}) \in l_{p}(\hat{F}(r,s))$ if and only if $Dy \in c $ whenever $y \in l_{p}.$ Thus $a \in\left\lbrace  l_{p}(\hat{F}(r,s))\right\rbrace ^{\beta} $ if and only if $a \in d_{2},$ $a \in d_{3}.$ Hence
    $\left\lbrace l_{p}(\hat{F}(r,s))\right\rbrace ^{\beta}=d_{2}\cap d_{3}.$
   Similarly, we can show that 
    $\left\lbrace l_{\infty}(\hat{F}(r,s))\right\rbrace ^{\beta}=d_{3}\cap d_{4}.$
\end{proof}
\begin{thm}
 $\left\lbrace l_{p}(\hat{F}(r,s))\right\rbrace ^{\gamma}=d_{2},1<p \leq \infty.$
\end{thm}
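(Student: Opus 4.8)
The plan is to mirror exactly the structure used in the preceding $\beta$-dual theorem, since the $\gamma$-dual is defined through the space $bs$ of bounded series in place of the convergent series $cs$. First I would start from an arbitrary $a=(a_k)\in\omega$ and, for $x=(x_n)\in l_p(\hat F(r,s))$, introduce the associated sequence $y=(y_n)=\hat F(r,s)(x)\in l_p$ via Theorem \ref{thm23}. Using the inverse relation (\ref{24}), I would reproduce the key algebraic identity (\ref{32}), namely
\[
\sum_{k=0}^{n} a_k x_k
=\sum_{k=0}^{n}\left(\sum_{j=k}^{n}\frac{1}{r}\Bigl(-\frac{s}{r}\Bigr)^{j-k}\frac{f_{j+1}^{2}}{f_{k}f_{k+1}}a_{j}\right)y_k
= D_n(y),
\]
with the very same matrix $D=(d_{nk})$ as in the $\beta$-dual theorem. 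The point is that this finite-sum reindexing is purely formal and does not depend on whether we later demand convergence or only boundedness of the partial sums.

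Next I would translate the membership condition. By definition $a\in\{l_p(\hat F(r,s))\}^{\gamma}$ means $ax=(a_k x_k)\in bs$ for every $x\in l_p(\hat F(r,s))$, which by the identity above is equivalent to requiring that $Dy\in l_\infty$ for every $y\in l_p$, i.e. that the sequence of partial sums $\{D_n(y)\}$ is bounded. This is precisely the statement $D\in(l_p,l_\infty)$. I would then invoke Lemma \ref{lem11}(d), which characterizes $(l_p,l_\infty)$ for $1<p<\infty$ by the single condition (\ref{17}), i.e. $\sup_n\sum_k|d_{nk}|^q<\infty$ with $q=p/(p-1)$. This condition is exactly the defining condition of the set $d_2$, so $a\in\{l_p(\hat F(r,s))\}^{\gamma}$ if and only if $a\in d_2$, which gives the claim for $1<p<\infty$. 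For the endpoint $p=\infty$ I would note that the relevant characterization of $(l_\infty,l_\infty)$ again reduces to the boundedness condition defining $d_2$ (with the natural convention $q=1$), so the same conclusion holds.

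The main obstacle here is not analytic difficulty but bookkeeping: I must be careful that the double-index manipulation in the identity is valid termwise for the relevant $x$ and $y$, and that the matrix $D$ written with $a_n$ in its definition is correctly read as $a_j$ inside the inner sum (matching the summand in (\ref{32}) rather than the slightly ambiguous displayed matrix). The conceptual core is simply that passing from the $\beta$-dual to the $\gamma$-dual replaces the class $(l_p,c)$ by $(l_p,l_\infty)$; since the former is characterized by (\ref{17}) \emph{and} (\ref{18}) while the latter is characterized by (\ref{17}) \emph{alone}, the $\gamma$-dual is obtained from the $\beta$-dual by dropping the limit condition $d_3$. Thus I expect the proof to be short, reading off the result directly from Lemma \ref{lem11}(d) once the identity (\ref{32}) is in hand.
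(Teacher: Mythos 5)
Your proposal is correct and follows exactly the route the paper intends: its entire proof is the one-line remark that the result follows from Lemma \ref{lem11}, and what you have written is precisely the expansion of that remark --- the identity (\ref{32}) turns $ax\in bs$ for all $x\in l_{p}(\hat F(r,s))$ into $D\in(l_{p},l_{\infty})$, which Lemma \ref{lem11}(d) characterizes by the condition defining $d_{2}$. Your explicit treatment of the endpoint $p=\infty$ (via the standard characterization of $(l_{\infty},l_{\infty})$ with $q=1$) and your correction of $a_{n}$ to $a_{j}$ in the displayed matrix $D$ are both sound and, if anything, more careful than the paper itself.
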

\begin{proof}
This result can be obtained from Lemma \ref{lem11}.
\end{proof}
\section{ Some matrix transformations related to the sequence space $ l_{p}(\hat{F}(r,s))$ }

In this section, we characterize the classes $ \left( l_{p}(\hat{F}(r,s)),X \right),$ where $1 \leq p \leq \infty $ and $X$ is any of the spaces $ l_{\infty},l_{1},c$ and $c_{0}.$
\par We use the following lemma to prove our results.
\begin{lem}\cite{A}
Let $ C=(c_{nk}) $ be defined via a sequence $ a=(a_{k}) \in \omega$ and the inverse matrix $ V=(v_{nk})$ of the triangle matrix $ U=(u_{nk})$ by 
 $$ c_{nk}
= \left\{
        \begin{array}{ll}
           \sum\limits_{j=k}^{n}a_{j}v_{jk} , 
           & 0 \leq k \leq n \\
            0  ,
           &  k > n
        \end{array}
  \right. $$
 for all $k,n \in \mathbb{N}.$ Then for any sequence space $\lambda,$\\
 $\lambda_{U}^{\gamma}=\left\lbrace a=(a_{k})\in \omega : C \in (\lambda, l_{\infty}) \right\rbrace $ and
 $\lambda_{U}^{\beta}=\left\lbrace a=(a_{k})\in \omega : C \in (\lambda, c) \right\rbrace .$
 \end{lem}
 \begin{thm}\label{thm42}
 Let $ \lambda = l_{p},$ $ 1\leq p \leq \infty$ and $\mu$ be an arbitrary subset of $\omega.$ Then  $A=(a_{nk}) \in (\lambda_{\hat{F}(r,s)}, \mu)$ if and only if \\
 \begin{equation}
 D^{(m)}=\left( d_{nk}^{(m)} \right)\in (\lambda,c) \,for\, all\, n \in \mathbb{N},  
 \label{41}
 \end{equation}
  \begin{equation}
  D=\left( d_{nk} \right)\in (\lambda,\mu), 
  \label{42}
  \end{equation}
  where 
  $$
  d_{nk}^{(m)}
  = \left\{
          \begin{array}{ll}
             \sum\limits_{j=k}^{m}\frac{1}{r}
             \left(-\frac{s}{r} \right)^{j-k} \frac{f_{j+1}^{2}}{f_{k}f_{k+1}}a_{nj} , 
             & 0\leq k \leq m \\
              0 ,
             & k>m
          \end{array}
      \right.
  $$
  and $d_{nk}=\sum\limits_{j=k}^{\infty}\frac{1}{r}
  \left(-\frac{s}{r}\right)^{j-k} \frac{f_{j+1}^{2}}{f_{k}f_{k+1}}a_{nj}$ for all $k,m,n \in \mathbb{N}.$
 \end{thm}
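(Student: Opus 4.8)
The plan is to reduce the matrix class $(\lambda_{\hat{F}(r,s)},\mu)$ to an ordinary matrix class on $\lambda=l_p$ by pushing the $\hat{F}(r,s)$-domain through the transform. Recall from Theorem~\ref{thm23} that $\lambda_{\hat{F}(r,s)}\cong\lambda$ via the isomorphism $T:x\mapsto y=\hat{F}(r,s)(x)$, and that the inverse relation is given explicitly in~(\ref{24}) by $x_k=\sum_{j=0}^{k}\frac{1}{r}\bigl(-\frac{s}{r}\bigr)^{k-j}\frac{f_{k+1}^{2}}{f_{j}f_{j+1}}\,y_j$. The key idea is that $A\in(\lambda_{\hat{F}(r,s)},\mu)$ means $Ax\in\mu$ for every $x\in\lambda_{\hat{F}(r,s)}$, equivalently $Ax\in\mu$ for every $y\in\lambda$ after substituting the inverse formula for $x$ in terms of $y$. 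So first I would substitute~(\ref{24}) into the $n$th row $A_n(x)=\sum_k a_{nk}x_k$ and formally interchange the order of summation to obtain $A_n(x)=\sum_k d_{nk}\,y_k$, where $d_{nk}=\sum_{j=k}^{\infty}\frac{1}{r}\bigl(-\frac{s}{r}\bigr)^{j-k}\frac{f_{j+1}^{2}}{f_{k}f_{k+1}}a_{nj}$ is exactly the matrix $D$ defined in the statement.

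The delicate point, and what I expect to be the main obstacle, is justifying that interchange of summation: the inner series defining each $d_{nk}$ is an infinite series, so I cannot simply swap the two sums without an absolute-convergence guarantee. This is precisely why the auxiliary condition~(\ref{41}) appears in the statement. The second step is therefore to introduce the finite truncations $d_{nk}^{(m)}$, which arise from the legitimate finite interchange $\sum_{k=0}^{m}a_{nk}x_k=\sum_{k=0}^{m}d_{nk}^{(m)}y_k$ using only the first $m{+}1$ terms. I would argue that condition~(\ref{41}), namely $D^{(m)}=(d_{nk}^{(m)})\in(\lambda,c)$ for each fixed $n$, is exactly what guarantees that, as $m\to\infty$, the $n$th partial-sum operator converges and that each $d_{nk}$ (the limit of $d_{nk}^{(m)}$ as $m$ grows) is well defined and that $A_n(x)=\sum_k d_{nk}y_k$ holds. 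This hinges on the standard FK-space fact that a matrix in $(\lambda,c)$ has well-defined column limits, so that $\sum_k a_{nk}x_k$ and $\sum_k d_{nk}y_k$ agree for every $y\in\lambda$.

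Once the identity $A_n(x)=D_n(y)$ is established for all $n$ and all $y\in\lambda$, the remainder is immediate: $Ax\in\mu$ for every $x\in\lambda_{\hat{F}(r,s)}$ if and only if $Dy\in\mu$ for every $y\in\lambda$, which is by definition the statement $D=(d_{nk})\in(\lambda,\mu)$, i.e.~condition~(\ref{42}). For the converse direction I would check that~(\ref{41}) and~(\ref{42}) together force $A\in(\lambda_{\hat{F}(r,s)},\mu)$: condition~(\ref{41}) again secures that each row $A_n(x)$ converges and equals $D_n(y)$, after which~(\ref{42}) delivers $Ax=Dy\in\mu$. Thus the two displayed conditions are jointly necessary and sufficient, and the proof is complete once the interchange justification via the truncations $D^{(m)}$ is made rigorous.
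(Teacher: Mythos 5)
Your proposal is correct and follows essentially the same route as the paper's proof (which itself follows Kiri\c{s}\c{c}i and Ba\c{s}ar): substitute the inverse transform (\ref{24}) into the rows of $A$, use the finite identity $\sum_{k=0}^{m}a_{nk}x_{k}=D_{n}^{(m)}(y)$ to avoid an unjustified interchange of summation, read off condition (\ref{41}) as the existence of $Ax$, and pass to the limit $m\to\infty$ to get $Ax=Dy$, whence (\ref{42}) is equivalent to $A\in(\lambda_{\hat{F}(r,s)},\mu)$. If anything, your treatment of the summation-interchange issue and of the column limits $d_{nk}=\lim_{m}d_{nk}^{(m)}$ is more explicit than the paper's.
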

\begin{proof}
To prove this theorem, we follow the similar way due to Kiri\c{s}\c{c}i and Ba\c{s}ar \cite{Kiri}.
Let $A=(a_{nk}) \in (\lambda_{\hat{F}(r,s)}, \mu)$ and $ x=(x_{k})\in \lambda_{\hat{F}(r,s)}.$ We have from (\ref{24}),\\ $x_{k}=\sum\limits_{j=0}^{k}\frac{1}{r} \left(-\frac{s}{r}\right)^{k-j}\frac{f_{k+1}^{2}}{f_{j}f_{j+1}}y_{j}$ for all $k \in \mathbb{N}.$\\
 From (\ref{32}) we get
\begin{equation}
\sum\limits_{k=0}^{m} a_{nk}x_{k}\\
=\sum\limits_{k=0}^{m}\left( \sum\limits_{j=k}^{m}\frac{1}{r} \left(-\frac{s}{r}\right)^{j-k}\frac{f_{j+1}^{2}}{f_{k}f_{k+1}}a_{nj} \right)y_{k} \\ =\sum\limits_{k=0}^{m}d_{nk}^{(m)}y_{k}
=D_{n}^{(m)}(y),
\label{43}
\end{equation}
for all $m,n \in \mathbb{N}.$\\
 Since $Ax$ exists, $D^{(m)}\in (\lambda,c).$ As $m\rightarrow \infty$ in the equality (\ref{43}), we obtain  $Ax=Dy$ which implies $D \in (\lambda,\mu).$
\par Conversely, suppose (\ref{41}) and (\ref{42}) holds and take any $ x=(x_{k})\in \lambda_{\hat{F}(r,s)}.$ Then we have $ (d_{nk})\in \lambda^{\beta}$ which gives together with (\ref{41}) that $A_{n}=(a_{nk})_{k \in \mathbb{N}}\in  \lambda_{\hat{F}(r,s)}^{\beta}$ for all $n \in \mathbb{N}.$ Thus $Ax$ exists. Therefore we derive by equality (\ref{43}) as $m \rightarrow \infty$ that $Ax=Dy$ and this shows that
$A \in (\lambda_{\hat{F}(r,s)}, \mu).$
\end{proof} 
 Now we consider the following conditions  
 \begin{equation}
 \sup_{n}  \sum\limits_{k}\left| d_{nk}^{(m)}\right|^{q} < \infty , q=\frac{p}{p-1}
 \label{44}
 \end{equation} 
 \begin{equation}
 \lim_{n}d_{nk}^{(m)}  \mbox{~exists~} \forall \,k
 \label{45}
 \end{equation}
 \begin{equation}
 \lim_{n}\sum\limits_{k} \left|d_{nk}^{(m)}\right|=
 \sum\limits_{k} \left|\lim_{n} d_{nk}^{(m)}\right|
 \label{46}
 \end{equation}
  \begin{equation}
  \sup_{n}  \sum\limits_{k}\left| d_{nk}\right|^{q} < \infty , q=\frac{p}{p-1}
  \label{47}
  \end{equation}
 \begin{equation}
 \lim_{n}d_{nk} \mbox{~ exists~} \forall  \, k
 \label{48}
 \end{equation}
 \begin{equation}
 \lim_{n} \sum\limits_{k}\left|d_{nk}\right|
 = \sum\limits_{k}\left|\lim_{n}d_{nk}\right|
 \label{49}
 \end{equation}
 \begin{equation}
 \sup_{K \in \mathcal{F}} \sum\limits_{k}\left| \sum\limits_{n \in K} d_{nk}\right|^{q}  < \infty, q=\frac{p}{p-1}
 \label{410}
 \end{equation}
      
  Combining Theorems \ref{thm42} and Lemma \ref{lem11}, we derive the following results:\\
  \begin{cor}
  Let $A=(a_{nk})$ be an infinite matrix. Then the following statements hold:
  \begin{enumerate}
  \item[(a)] $A \in (l_{p}(\hat{F}(r,s)),c), 1<p<\infty$ if and only if (\ref{44}),(\ref{45}),(\ref{47}),(\ref{48}). 
  \item[(b)] $A \in (l_{p}(\hat{F}(r,s)),l_{1}),1<p<\infty$ if and only if (\ref{44}),(\ref{45}),(\ref{410}).
   \item[(c)] $A \in (l_{\infty}(\hat{F}(r,s)),c)$ if and only if    (\ref{45}),(\ref{46}),(\ref{48}),(\ref{49}).
  \item[(d)] $A \in (l_{p}(\hat{F}(r,s)),l_{\infty}),1<p<\infty$ if and only if
  (\ref{44}),(\ref{45}),(\ref{47}).
  \item[(e)] $A \in (l_{\infty}(\hat{F}(r,s)),l_{1})$ if and only if
  (\ref{45}),(\ref{46}),(\ref{410}).
  \item[(f)] $A \in (l_{\infty}(\hat{F}(r,s)),l_{\infty})$ if and only if    
  (\ref{45}),(\ref{46}),(\ref{47}).
  \end{enumerate}
  \end{cor}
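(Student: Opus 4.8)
The plan is to read off each of the six equivalences directly from Theorem \ref{thm42}, specialising the pair $(\lambda,\mu)$ in turn and then translating the two resulting membership conditions through Lemma \ref{lem11}. Recall that Theorem \ref{thm42} asserts $A=(a_{nk})\in(\lambda_{\hat{F}(r,s)},\mu)$ if and only if both (\ref{41}), i.e. $D^{(m)}=(d_{nk}^{(m)})\in(\lambda,c)$, and (\ref{42}), i.e. $D=(d_{nk})\in(\lambda,\mu)$, hold. Thus I would take $\lambda=l_{p}$ with $1<p<\infty$ for parts (a), (b), (d), and $\lambda=l_{\infty}$ for parts (c), (e), (f), and in each case characterise (\ref{41}) and (\ref{42}) separately before intersecting the two lists of conditions.

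First I would dispose of condition (\ref{41}), which depends only on $\lambda$. Applying Lemma \ref{lem11}(a) to the matrix $D^{(m)}$ shows that for $\lambda=l_{p}$ the requirement $D^{(m)}\in(l_{p},c)$ is equivalent to (\ref{44}) and (\ref{45}), these being exactly (\ref{17}) and (\ref{18}) written out for $D^{(m)}$. When $\lambda=l_{\infty}$, Lemma \ref{lem11}(c) instead gives that $D^{(m)}\in(l_{\infty},c)$ is equivalent to (\ref{45}) and (\ref{46}), the analogues of (\ref{18}) and (\ref{20}) for $D^{(m)}$. This settles (\ref{41}) uniformly for all six items.

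Next I would treat condition (\ref{42}) by invoking Lemma \ref{lem11} again, now on $D$, matching $\mu$ to the appropriate part: for $\mu=c$ with $\lambda=l_{p}$, part (a) yields (\ref{47}) and (\ref{48}); for $\mu=l_{1}$ with $\lambda=l_{p}$, part (b) yields (\ref{410}); for $\mu=l_{\infty}$ with $\lambda=l_{p}$, part (d) yields (\ref{47}); and for $\mu=c$ with $\lambda=l_{\infty}$, part (c) yields (\ref{48}) and (\ref{49}). Combining each of these with the conditions already obtained for (\ref{41}) reproduces verbatim the lists in items (a)--(d), and the argument at this stage is purely the mechanical bookkeeping of unioning the two condition sets.

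The only genuine subtlety, and the step I expect to require extra care, is parts (e) and (f). There $\lambda=l_{\infty}$ and $\mu\in\{l_{1},l_{\infty}\}$, but Lemma \ref{lem11} as stated does not list the classes $(l_{\infty},l_{1})$ and $(l_{\infty},l_{\infty})$. These must be supplied as the limiting ($q=1$) forms of the Stieglitz--Tietz characterisations from \cite{Michael}, namely $D\in(l_{\infty},l_{1})$ iff $\sup_{K\in\mathcal{F}}\sum_{k}\left|\sum_{n\in K}d_{nk}\right|<\infty$ and $D\in(l_{\infty},l_{\infty})$ iff $\sup_{n}\sum_{k}|d_{nk}|<\infty$, which are precisely (\ref{410}) and (\ref{47}) read with $q=1$. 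Once this identification is in place, intersecting with (\ref{45}) and (\ref{46}) from condition (\ref{41}) delivers the stated lists for (e) and (f) and completes the corollary.
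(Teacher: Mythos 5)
Your proposal is correct and follows exactly the paper's route: the paper offers no written proof beyond the phrase ``Combining Theorem \ref{thm42} and Lemma \ref{lem11}, we derive the following results,'' and your specialisation of $(\lambda,\mu)$ with the translation of (\ref{41}) and (\ref{42}) through Lemma \ref{lem11} is precisely that combination. In fact you are more careful than the paper in parts (e) and (f), where you correctly observe that the classes $(l_{\infty},l_{1})$ and $(l_{\infty},l_{\infty})$ are not covered by Lemma \ref{lem11} as stated and must be supplied from the Stieglitz--Tietz tables as the $q=1$ forms of (\ref{410}) and (\ref{47}) --- a gap the paper silently ignores.
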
  
  \section{ Some geometric properties of the space $l_{p}(\hat{F}(r,s))\,(1<p<\infty)$}
  In this section, we study some geometric properties of the space $l_{p}(\hat{F}(r,s))\,(1<p<\infty).$
  \par
  For geometric properties we refer to \cite{Kara,G2,K1}.
  \par
  A Banach space $X$ is said to have the Banach-Saks property if every bounded sequence $(x_{n})$ in $X$ admits a subsequence $(z_{n})$ such that the sequence $\left\lbrace t_{k}(z) \right\rbrace $ is convergent in the norm in $X$ (see \cite{M1}), where
  \begin{equation}
  t_{k}(z)=\frac{1}{k+1}\left( z_{0}+z_{1}+...+z_{k}\right) 
  \: (k \in \mathbb{N})
  \label{51}
  \end{equation} 
  \par 
  A Banach space $X$ is said to have the weak Banach-Saks property whenever, given any weakly null sequence $(x_{n})\subset X $, there exists a subsequence $(z_{n})$ of $(x_{n})$ such that the sequence $\left\lbrace t_{k}(z) \right\rbrace $
  is strongly convergent to zero.
  \par
  In \cite{G2}, Garc\'{i}a-Falset introduced the following coefficient:
  \begin{equation}
  R(X)=\sup\left\lbrace 
    \liminf_{n \rightarrow \infty}  \parallel x_{n}-x\parallel : (x_{n})\subset B(X), 
    x_{n} \rightarrow 0(weakly), x \in  B(X) \right\rbrace    
    \label{52}
  \end{equation} 
  where $B(X)$ denotes the unit ball of $X.$
  \begin{rem} \cite{G2}\label{rem51}
  A Banach space $X$ with $R(X)<2$ has the weak fixed point property.
  \end{rem}
  
 Let $1<p<\infty.$ A Banach space is said to have the Banach-Saks type $p$ or the property $(BS)_{p}$ if every weakly null sequence $(x_{k})$ has a subsequence $\left( x_{k_{l}} \right)$ such that for some $C>0,$
 \begin{equation}
 \parallel \sum\limits_{l=0}^{n} x_{k_{l}} \parallel
 < C(n+1)^{1/p}
 \label{53}
 \end{equation}
 for all $n\in \mathbb{N}$ (see \cite{K1}).
 \par Now we are going to prove some geometric properties of the space 
 $l_{p}(\hat{F}(r,s))$ for $1<p<\infty.$
 \begin{thm}
 Let $1<p<\infty.$ Then the space $l_{p}(\hat{F}(r,s))$ has the Bnach-Saks type $p.$
 \end{thm}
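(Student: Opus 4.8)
The plan is to mimic the now-standard argument (going back to work on Banach--Saks type $p$ for matrix-domain sequence spaces) exploiting the isometry $T:l_{p}(\hat{F}(r,s))\to l_{p}$ from Theorem \ref{thm23}, together with a gliding-hump construction on the coordinates. First I would take an arbitrary weakly null sequence $(x_{n})$ in the unit ball $B\bigl(l_{p}(\hat{F}(r,s))\bigr)$, so that $\parallel x_{n}\parallel_{l_{p}(\hat{F}(r,s))}\le 1$ for all $n$, and $x_{n}\to 0$ weakly. Since $T$ is a linear bijective isometry onto $l_{p}$ and is continuous (as any matrix map between $FK$-spaces is continuous, as noted in the basis theorem), the transformed sequence $y_{n}=Tx_{n}=\hat{F}(r,s)(x_{n})$ is a weakly null sequence in the unit ball of $l_{p}$. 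In $l_{p}$ weak-null together with coordinatewise behavior lets me build a subsequence that is almost disjointly supported.

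Next I would carry out the gliding-hump selection. Set $y_{n_{0}}=y_{0}$ and choose indices inductively: given $n_{l-1}$ there is an integer $t_{l}$ so that the tail $\bigl(\sum_{k\ge t_{l}}|(y_{n_{l-1}})_{k}|^{p}\bigr)^{1/p}$ is as small as we like, because $y_{n_{l-1}}\in l_{p}$; and because $y_{n}\to 0$ weakly, each fixed finite block of coordinates of $y_{n}$ tends to $0$, so I can pick $n_{l}>n_{l-1}$ making the initial block $\bigl(\sum_{k<t_{l}}|(y_{n_{l}})_{k}|^{p}\bigr)^{1/p}$ arbitrarily small. Choosing both error terms to decay like $\varepsilon_{l}$ with $\sum_{l}\varepsilon_{l}<\infty$, the subsequence $(y_{n_{l}})$ splits, up to a summable error, into consecutively supported blocks $z_{l}$ living on the coordinate windows $[t_{l},t_{l+1})$. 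I would then estimate $\parallel\sum_{l=0}^{n} y_{n_{l}}\parallel_{l_{p}}$ by writing $y_{n_{l}}=z_{l}+(\text{error}_{l})$, using the triangle inequality to absorb the errors into a fixed constant, and using the exact additivity of the $p$-norm over disjoint supports for the main blocks: $\parallel\sum_{l=0}^{n} z_{l}\parallel_{l_{p}}^{p}=\sum_{l=0}^{n}\parallel z_{l}\parallel_{l_{p}}^{p}\le(n+1)\cdot\max_{l}\parallel z_{l}\parallel_{l_{p}}^{p}\le(n+1)$, so that $\parallel\sum_{l=0}^{n} z_{l}\parallel_{l_{p}}\le(n+1)^{1/p}$.

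Combining, I obtain $\parallel\sum_{l=0}^{n} y_{n_{l}}\parallel_{l_{p}}\le(n+1)^{1/p}+\sum_{l}\varepsilon_{l}\le C\,(n+1)^{1/p}$ for a suitable constant $C>0$ and all $n\in\mathbb{N}$. Finally, because $T$ is norm-preserving and linear, $\parallel\sum_{l=0}^{n} x_{n_{l}}\parallel_{l_{p}(\hat{F}(r,s))}=\parallel T\bigl(\sum_{l=0}^{n} x_{n_{l}}\bigr)\parallel_{l_{p}}=\parallel\sum_{l=0}^{n} y_{n_{l}}\parallel_{l_{p}}\le C\,(n+1)^{1/p}$, which is exactly the inequality (\ref{53}) defining property $(BS)_{p}$. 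Hence the subsequence $(x_{n_{l}})$ witnesses that $l_{p}(\hat{F}(r,s))$ has Banach--Saks type $p$.

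The main obstacle I expect is the bookkeeping in the gliding-hump step: one must choose the two families of error terms simultaneously so that both the tails of the earlier selected vector and the heads of the next selected vector are controlled, and then verify that the $p$-norm errors really do sum to a finite constant independent of $n$. The clean part is the transfer through $T$: since the isometry reduces everything to $l_{p}$, no property peculiar to the Fibonacci band matrix $\hat{F}(r,s)$ is actually needed beyond the isomorphism of Theorem \ref{thm23}, and the additivity of the $p$-norm over disjoint supports does the real work.
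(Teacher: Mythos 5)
Your proposal is correct, and while its combinatorial core (the gliding--hump selection plus additivity of $p$-th powers over disjoint coordinate blocks) is the same as the paper's, the organization is genuinely different. The paper runs the hump \emph{directly inside} $l_{p}(\hat{F}(r,s))$: it uses that a weakly null sequence in a $BK$-space converges coordinatewise, splits each selected vector $z_{j}$ into head, block, and tail with respect to the unit vectors $e^{(i)}$, absorbs heads and tails into $2\sum_{j}\epsilon_{j}$, and adds the $p$-th powers of the blocks in the $\hat{F}(r,s)$-norm, which forces it to manipulate the Fibonacci matrix entries explicitly. You instead push everything through the isometry $T$ of Theorem \ref{thm23}, perform the identical hump in $l_{p}$ itself, and pull the estimate back; this is legitimate because a bounded linear bijection is weak-to-weak continuous (so $(Tx_{n})$ is again weakly null in the unit ball of $l_{p}$) and the norm identity $\parallel\sum_{l}x_{n_{l}}\parallel_{l_{p}(\hat{F}(r,s))}=\parallel\sum_{l}Tx_{n_{l}}\parallel_{l_{p}}$ transfers inequality (\ref{53}) verbatim. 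What your route buys: it exposes that Banach--Saks type $p$ is an isomorphic invariant, so the theorem is an immediate corollary of Theorem \ref{thm23} together with the classical fact that $l_{p}$ has type $p$ --- no property of the band matrix $\hat{F}(r,s)$ is needed --- and it incidentally avoids the exponent slips in the paper's closing estimates, where $C(n+1)^{p}$ and $(C+1)(n+1)^{p}$ appear in place of $C(n+1)^{1/p}$ and $(C+1)(n+1)^{1/p}$. What the paper's route buys: a self-contained argument inside the new space, following the template of the cited literature (Kara; Mursaleen, Ba\c{s}ar and Altay), and no appeal to weak-weak continuity of $T$. The one point you should make explicit when writing this up is precisely that continuity statement, since your assertion that $(y_{n})$ is weakly null rests on it.
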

 \begin{proof}
 Let $(\epsilon_{n})$ be a sequence of positive numbers for which $\sum \epsilon_{n} \leq 1/2,$ and also let $(x_{n})$ be a weakly null sequence in $B(l_{p}(\hat{F}(r,s))).$ Set $z_{0}=x_{0}=0$ and $z_{1}=x_{n_{1}}=x_{1}.$ Then there exists $m_{1} \in \mathbb{N}$ such that
 \begin{equation}
 \Arrowvert \sum\limits_{i=m_{1}+1}^{\infty} z_{1}(i)e^{(i)}\Arrowvert_{l_{p}(\hat{F}(r,s))}
 < \epsilon_{1}
 \label{54}
 \end{equation}
 Since $(x_{n})$ being a weakly null sequence implies $x_{n}\rightarrow 0$ coordinatewise, there is an $n_{2}\in \mathbb{N}$ such that
 \[\Arrowvert \sum\limits_{i=0}^{m_{1}} x_{n}(i)e^{(i)}\Arrowvert_{l_{p}(\hat{F}(r,s))}
  < \epsilon_{1},  \mbox{~when~} n \geq n_{2}.\]
   Set $z_{2}=x_{n_{2}}.$ Then there exists an $m_{2}>m_{1}$ such that
  \[ \Arrowvert \sum\limits_{i=m_{2}+1}^{\infty} z_{2}(i)e^{(i)}\Arrowvert_{l_{p}(\hat{F}(r,s))}
   < \epsilon_{2}.\]
   Again using the fact that $x_{n}\rightarrow 0$ coordinatewise, there exists an $n_{3} \geq n_{2}$ such that 
   \[\Arrowvert \sum\limits_{i=0}^{m_{2}} x_{n}(i)e^{(i)}\Arrowvert_{l_{p}(\hat{F}(r,s))}
     < \epsilon_{2}, \mbox{~when~} n\geq n_{3}.\]
 If we continue this process, we can find two increasing subsequences $(m_{i})$ and $(n_{i})$ such that 
   \[\Arrowvert \sum\limits_{i=0}^{m_{j}} x_{n}(i)e^{(i)}\Arrowvert_{l_{p}(\hat{F}(r,s))}
       < \epsilon_{j} \mbox{~for~ each~} n \geq n_{j+1}\] and
   \[\Arrowvert \sum\limits_{i=m_{j}+1}^{\infty} z_{j}(i)e^{(i)}\Arrowvert_{l_{p}(\hat{F}(r,s))}
        < \epsilon_{j}, \mbox{~where~} z_{j}=x_{n_{j}}.\]
   Hence \\
   $\Arrowvert \sum\limits_{j=0}^{n} z_{j}\Arrowvert_{l_{p}(\hat{F}(r,s))}\\
   =\Arrowvert \sum\limits_{j=0}^{n} \left( \sum\limits_{i=0}^{m_{j-1}}z_{j}(i)e^{(i)} +
   \sum\limits_{i=m_{j-1}+1}^{m_{j}}z_{j}(i)e^{(i)} + \sum\limits_{i=m_{j}+1}^{\infty}z_{j}(i)e^{(i)}
   \right)\Arrowvert_{l_{p}(\hat{F}(r,s)) }\\
   \leq \Arrowvert \sum\limits_{j=0}^{n} \left( \sum\limits_{i=m_{j-1}+1}^{m_{j}}z_{j}(i)e^{(i)} \right) \Arrowvert_{l_{p}(\hat{F}(r,s)) }
   +2\sum\limits_{j=0}^{n}\epsilon_{j}
   .$          
   \\ Since $z \in l_{p}(\hat{F}(r,s))$ therefore there exists $C>0$ such that 
   $\Arrowvert z\Arrowvert_{l_{p}(\hat{F}(r,s))} \leq C.$\\ Therefore we have that\\
   $\Arrowvert \sum\limits_{j=0}^{n} \left( \sum\limits_{i=m_{j-1}+1}^{m_{j}}z_{j}(i)e^{(i)} \right) \Arrowvert_{l_{p}(\hat{F}(r,s))}\\
   \leq \sum\limits_{j=0}^{n}\sum\limits_{i=m_{j-1}+1}^{m_{j}}\left| r \frac{f_{i}}{f_{i+1}}z_{j}(i)+
    s \frac{f_{i+1}}{f_{i}}z_{j}(i-1)\right|^{p}\\
    \leq \sum\limits_{j=0}^{n}\sum\limits_{i=0}^{\infty}\left| r \frac{f_{i}}{f_{i+1}}z_{j}(i)+
       s \frac{f_{i+1}}{f_{i}}z_{j}(i-1)\right|^{p}\\
    \leq  \sum\limits_{j=0}^{n} \Arrowvert z \Arrowvert_{l_{p}(\hat{F}(r,s))}
    \\ \leq C^{p}(n+1).$
    \\ Hence we obtain\\
    $\Arrowvert \sum\limits_{j=0}^{n} \left( \sum\limits_{i=m_{j-1}+1}^{m_{j}}z_{j}(i)e^{(i)} \right) \Arrowvert_{l_{p}(\hat{F}(r,s))}
    \leq C(n+1)^{p}.$\\
    By using the fact that $1 \leq (n+1)^{1/p}$ for all $n \in \mathbb{N}$ and $1<p<\infty,$ we have\\
    $\Arrowvert \sum\limits_{j=0}^{n} z_{j}\Arrowvert_{l_{p}(\hat{F}(r,s))}
    \leq C(n+1)^{p}+1 \leq (C+1)(n+1)^{p}.$\\
    Hence $l_{p}(\hat{F}(r,s))$ has the Banach-Saks type $p.$
    \begin{rem}\label{rem53}
    Note that $R\left(l_{p}(\hat{F}(r,s)) \right)=
    R\left(l_{p}\right)=2^{1/p} $ since $l_{p}(\hat{F}(r,s))$ is linearly isomorphic to $l_{p}.$
    \end{rem}
    By Remarks \ref{rem51} and \ref{rem53}, we have the following theorem.
    \begin{thm}
    The space $l_{p}(\hat{F}(r,s))$ has the weak fixed point property, where $1<p<\infty.$
    \end{thm}
 
 \end{proof}

\end{document}